\newtheorem{theorem}{Theorem}
\newcounter{cor}
\newtheorem{corollary}[cor]{Corollary}
\newtheorem{example}[theorem]{Example}
\newcounter{lem}
\newtheorem{lemma}[lem]{Lemma}
\newcounter{prop}
\newcounter{rem}
\newtheorem{remark}[rem]{Remark}
\newenvironment{proof}[1][Proof]{\noindent\textbf{#1.} }{\ \rule{0.5em}{0.5em}}
\begin{document}

\title{Semiparametric efficiency bounds for seemingly unrelated conditional
moment restrictions}
\author{Marian Hristache \& Valentin Patilea \\
CREST (Ensai)\footnote{CREST, Ecole Nationale de la Statistique et de l'Analyse
 de l'Infdormation (Ensai), Campus de Ker-Lann, rue Blaise Pascal, BP 37203,
 35172 Bruz, cedex, France. Authors emails: hristach@ensai.fr, patilea@ensai.fr }}
\maketitle

\begin{abstract}
This paper addresses the problem of semiparametric efficiency bounds for
conditional moment restriction models with different conditioning variables. We
characterize such an efficiency bound, that in general is not explicit, as a
limit of explicit efficiency bounds  for a decreasing sequence of unconditional
(marginal) moment restriction models. An iterative procedure for approximating
the efficient score when this is not explicit is provided. Our theoretical
results complete and extend existing results in the literature, provide new
insight for the theory of semiparametric efficiency bounds literature and
open the door to new applications. In particular, we investigate a class of
regression-like (mean regression, quantile regression,...) models with missing data.
\end{abstract}

\setcounter{theorem}{0} \setcounter{lem}{0} \setcounter{cor}{0} %
\setcounter{prop}{0}

\section{The model}

Conditional moment restriction models represent a large class of statistical models.
Seemingly unrelated nonlinear regressions, see Gallant (1975), M\"{u}ller (2009),
seemingly unrelated quantile  regressions, see Jun and Pinske (2009), regression
models with missing data, see Robins, Rotnitzky and Zhao~(1994),  Tsiatis~(2006),
are only few examples and related contributions. Ai and Chen (2009) and Hansen (2007)
provide many other references and examples of  econometric models that could be
stated  as conditional moment restriction models.

In this paper we address the problem of calculating semiparametric efficiency
bounds in models defined by several conditional moment restrictions with possibly
different conditioning variables.
More formally, the sample under study consists of independent copies of a random vector $%
Z\in \mathcal{Z}\subset \mathbb{R}^{q}$. Let $J$ be some positive integer
that is fixed in the following. For any $j\in \left\{ 1,\ldots ,J\right\} $,
let $X^{\left( j\right) }$ be a random $q_{j}-$dimension subvector of $Z$,
where $0\leq q_{j}<q$. Let $g_{j}:\mathbb{R}^{q}\mathbb{\times R}%
^{d}\rightarrow \mathbb{R}^{p_{j}}$, $j\in \left\{ 1,\ldots ,J\right\} ,$
denote given functions of $Z$ and the unknown parameter $\theta \in \Theta
\subset \mathbb{R}^{d}$. The semiparametric model we consider is defined by
the conditional moment restrictions
\begin{equation}
E\left[ g_{j}\left( Z,\theta \right) \ |\ X^{\left( j\right) }\right]
=0,\quad j=1,\ldots ,J,\ \ \ \text{almost surely}.  \label{model_I}
\end{equation}%
It is assumed that the $d-$dimension parameter $\theta $ is
identified by the conditional restrictions, which means  there exists a unique
value $\theta _{0}$ such that the true law of $Z$ satisfies
equations (\ref{model_I}). By definition, $X^{(j)}$ is a constant random variable when $q_{j}=0$, and
hence the conditional expectation given $X^{(j)}$ is the marginal
expectation.

Particular cases of this model have been extensively studied in the
literature. For $J=1$ and  $q_{1}=0$ we obtain a model defined by an
unconditional set of moment equations%
\begin{equation*}
E\left[ g\left( Z,\theta \right) \right] =0.
\end{equation*}%
Hansen~(1982) considered the class
of GMM estimators and showed how to construct an optimal one in this class.
Its asymptotic variance equals the the semiparametric efficiency bound
obtained by Chamberlain~(1987).

The GMM method extends naturally to models
defined by conditional moment equations, corresponding to the case $J=1$
and $q_{1}>0$ in our setting, that is
\begin{equation*}
E\left[ g\left( Z,\theta \right) \ |\ X\right] =0.
\end{equation*}%
From a mathematical point of view, such a model is equivalent to the
intersection of the models of the form%
\begin{equation*}
E\left[ a\left( X\right) \ g\left( Z,\theta \right) \right] =0,
\end{equation*}%
where $a\left( X\right) $ is an arbitrary conformable random matrix
whose entries are square integrable. Following the econometric literature, $a\left( X\right) $ is referred to as a matrix of \emph{instruments}.
The supremum of the information on $%
\theta _{0}$ in these models yields the semiparametric Fisher information on
$\theta _{0}$ in the conditional equation model, obtained by Chamberlain
(1992a). It is also the information on $\theta _{0}$ for the unconditional
moment equation%
\begin{equation*}
E\left[ a^{\ast }\left( X\right) \ g\left( Z,\theta \right) \right] =0,
\end{equation*}%
with properly chosen `optimal' instruments $a^{\ast }\left( X\right) $.

A
further generalization, which can also be written under the form $(\ref{model_I}) $,
is given by a sequential (nested) moment restrictions model, in
which the $\sigma-$fields generated by the conditioning vectors satisfy
the condition $\sigma \left( X^{\left( 1\right) }\right) \subset \sigma \left(
X^{\left( 2\right) }\right) \subset \ldots \subset \sigma \left( X^{\left(
J\right) }\right) $. For the expression of the semiparametric efficiency
bound in the sequential case,  see Chamberlain (1992b) and Ai and
Chen~(2009); see also Hahn~(1997) and Ahn and Schmidt~(1999) and references
therein for examples of applications. It turns out that once again
the information on $\theta _{0}$ can be obtained by taking the supremum of
the information on $\theta _{0}$ in the following unconditional models~:%
\begin{equation*}
E\left[ a_{j}\left( X^{\left( j\right) }\right) \ g_{j}\left( Z,\theta
\right) \right] =0,\quad j=1,\ldots ,J,
\end{equation*}%
where the number of lines of the matrices $a_{j}$ is fixed and equal to the dimension of $\theta$
and the supremum is attained for a suitable choice $a_{1}^{\ast }\left(
X^{\left( 1\right) }\right) ,\ldots ,a_{J}^{\ast }\left( X^{\left( J\right)
}\right) $ of optimal instruments. The reason why this happens
in the case with nested $\sigma-$fields is the fact that the model of interest can be written as the decreasing limit of a sequence of models for which a so-called \emph{`spanning condition'}, similar to
the one considered in Newey~(2004), holds and the limit of the corresponding
efficient scores has an explicit solution.

In this paper we  show that the information on $\theta _{0}$ in model $%
\left( \ref{model_I}\right) $ can be obtained as the limit of the
information on $\theta _{0}$ in a decreasing sequence of unconditional
moment models of the form%
\begin{equation}\label{ezr}
E\left[ a_{j}^{\left( k\right) }\left( X^{\left( j\right) }\right) \
g_{j}\left( Z,\theta \right) \right] =0,\quad j=1,\ldots ,J,\quad k=1,2,\cdots
\end{equation}%
where the numbers of lines in the matrices $a_{j}^{\left( k\right)
}$ increases to infinity with $k$. To our best knowledge this result
is new. It provides theoretical support for a natural solution that
could be used in practice: replace the model (\ref{model_I}) by a
large number of unconditional moment conditions like (\ref{ezr}) in
order to approach efficiency. Herein we also propose an alternative
route for approximating the efficiency bound. More precisely,  we
give a general method to approximate the efficient score, which in
most of the situations does not have an explicit form as in the
aforementioned examples. In particular, our general approach for
approximating the efficient score brings in a new light the
functional equations used to characterize the efficient score in the
regression model with unobserved explanatory variables in Robins,
Rotnitzky and Zhao~(1994); see also Tsiatis~(2006) and Tan~(2011).
To summarize, our theoretical results complete and extend existing
results in the literature, provide new insight for the theory of
semiparametric efficiency bounds literature and open the door to new
applications, in particular in missing data contexts.

The paper is organized as follows. Section \ref{main_result_sec} contains our main results. We show that under a suitable  `spanning condition' on the tangent spaces, the  semiparametric Fisher
information in model (\ref{model_I}) can be obtained
as the limit of the efficiency bounds for a decreasing sequence of models.
In section \ref{Section : Efficient estimation} we propose a `backfitting' procedure, for computing the projection of the score on the tangent space of the model. With at hand an approximation of the efficient score, we suggest a general method for constructing asymptotically efficient estimators. In section \ref{sec_applis_b} we illustrate we illustrate the utility of our theoretical results for two large classes of
models: sequential (nested) conditional models and regression-like models with missing data.
The technical assumptions required for our results and some technical proofs are relegated to the Appendix.

\section{The main results}

\label{main_result_sec} Let us introduce some notation and definitions, see
also van der Vaart (1998), sections 25.2 and 25.3. Given a sample space $%
\mathcal{Z}$ and a probability $P$ on the sample space, we denote by $L^2(P)$
the usual Hilbert space of measurable real-valued functions that are
squared-integrable with respect to $P.$ For $\mathcal{H}$ a Hilbert space
and $\mathcal{S}\subset \mathcal{H}$ let $\overline{S}$ denote the closure
of $\mathcal{S}$ in $\mathcal{H}$. Moreover, if $\mathcal{S}\subset \mathcal{%
H}$ is a linear subspace and $h\in \mathcal{H}$, let $\Pi(h | \mathcal{S})$
be the projection of $h$ on $\overline{S}$. The statistical models on the
sample space $\mathcal{Z}$, are denoted by $\mathcal{P}$, $\mathcal{P}_{1}$,
$\mathcal{P}_{2}$... A statistical model is a collection of probability
measures defined by their densities with respect to some fixed dominating
measure on the sample space. For a model $\mathcal{P}$ (resp. $\mathcal{P}%
_{j}$) and a probability measure $P$ in the model, let $\mathcal{\dot{P}}%
_{P} $ (resp. $\mathcal{\dot{P}}_{j,P}$) denote the tangent cone of the
model $\mathcal{P}$ (resp. $\mathcal{P}_{j}$) at $P$. When there is no
possible confusion, we simply write $\mathcal{\dot{P}}_{P}$ (resp. $\mathcal{%
\dot{P}}_{j,P}$). Let $\mathcal{T}( \mathcal{P},P )$ denote the tangent
space of a model $\mathcal{P}$ at some probability measure $P\in\mathcal{P}$%
, that means the closure of the linear span of the tangent set $\mathcal{%
\dot{P}}_{P}$. By definition, both the tangent cone and the tangent space
are subsets of $L^2(P)$. Herein the vectors are column matrices and $A \in
\mathbb{R}^r \times \mathbb{R}^s$ means $A$ is a $r\times s-$matrix with
random elements, if not stated differently. For $A\in \mathbb{R}^r \times
\mathbb{R}^r,$ $E(A)$ denotes the expectation of $A$ and $E^{-1}(A)$ denotes
the inverse of the square matrix $E(A)$. Finally, for a square matrix $A$,
let $A^-$ denote a generalized inverse, for instance the Moore-Penrose
pseudoinverse.

\subsection{A general lemma}

The following result is a generalization of Theorem 1 in Newey
(2004) where only the case of conditioning vectors $X^{(j)}$,
$j=1,\cdots,J,$ that generate the same $\sigma-$field is considered.
The proof of our result is postponed to the Appendix.

\begin{lemma}
\label{key_lemma} Let $P_{0}\in \mathcal{P\subset P}_{1}$ be the true law of
the vector $Z\in\mathcal{Z}$ and $\theta_0 = \psi(P_0)$ for a map $\psi :%
\mathcal{P}_{1}\rightarrow \mathbb{R}^{d}$ differentiable at $P_{0}$
relative to the tangent cone $\mathcal{\dot{P}}_{1,P_{0}}$. Let $\left\{
\mathcal{P}_{k}\right\} _{k\in \mathbb{N}^{\ast }}$ be a decreasing family
of statistical models such that%
\begin{equation}
\mathcal{P}_{1}\supset \mathcal{P}_{2}\supset \ldots \supset \mathcal{P}%
_{j}\supset \mathcal{P}_{k+1}\supset \ldots \supset
\bigcap\limits_{k=1}^{\infty }\mathcal{P}_{k}\supset \mathcal{P}\ni P_{0}
\label{decreasing}
\end{equation}%
and
\begin{equation}
\bigcap\limits_{k=1}^{\infty }\mathcal{T}_{k}=\mathcal{T},
\label{spaning cond 2}
\end{equation}%
where $\mathcal{T}=\mathcal{T}\left( \mathcal{P},P_{0}\right) $ and $%
\mathcal{T}_{k}=\mathcal{T}\left( \mathcal{P}_{k},P_{0}\right) ,$ $k\in
\mathbb{N}^{\ast }$. Then%
\begin{equation*}
I_{\theta _{0}}\left( \mathcal{P}\right) =\lim_{k\rightarrow \infty
}I_{\theta _{0}}\left( \mathcal{P}_{k}\right) ,
\end{equation*}%
where $I_{\theta _{0}}\left( \mathcal{P}\right) $ stands for the Fisher
information on $\theta _{0}=\psi \left( P_{0}\right) $ in the model $%
\mathcal{P}$.
\end{lemma}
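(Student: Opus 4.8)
The plan is to reduce the asserted convergence of information matrices to a convergence of orthogonal projections in the Hilbert space $L^2(P_0)$, with the spanning condition (\ref{spaning cond 2}) doing the essential work. First I would recall (van der Vaart (1998), Sections 25.2--25.3) that since $\psi$ is differentiable at $P_0$ relative to $\mathcal{\dot{P}}_{1,P_{0}}$, it admits a gradient, and its efficient influence function in $\mathcal{P}_{1}$ is the unique gradient $\tilde\psi_1\in\mathcal{T}_1$; fix this $\tilde\psi_1$. Because $\mathcal{P}\subset\mathcal{P}_k\subset\mathcal{P}_1$, the reversed chain $\mathcal{T}\subset\cdots\subset\mathcal{T}_{k+1}\subset\mathcal{T}_k\subset\cdots\subset\mathcal{T}_1$ holds, and $\psi$ stays differentiable on each submodel with the same derivative map restricted. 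The key identity is that the efficient influence function in a submodel is a projection: $\tilde\psi_k=\Pi(\tilde\psi_1 | \mathcal{T}_k)$ and $\tilde\psi_\infty=\Pi(\tilde\psi_1 | \mathcal{T})$. This holds componentwise because, for every $g$ in the smaller tangent space, $\dot\psi(g)=\langle\tilde\psi_1,g\rangle=\langle\Pi(\tilde\psi_1 | \mathcal{T}_k),g\rangle$ while $\Pi(\tilde\psi_1 | \mathcal{T}_k)\in\mathcal{T}_k$, and the gradient lying inside the space is unique. Writing the Fisher information as the inverse of the efficiency bound, this gives $I_{\theta_0}(\mathcal{P}_k)=\tilde V_k^{-1}$ and $I_{\theta_0}(\mathcal{P})=\tilde V^{-1}$ with Gram matrices $\tilde V_k=E_{P_0}[\tilde\psi_k\tilde\psi_k^\top]$ and $\tilde V=E_{P_0}[\tilde\psi_\infty\tilde\psi_\infty^\top]$.

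Second, I would invoke the Hilbert-space fact that for a decreasing sequence of closed subspaces $\mathcal{T}_k\downarrow\bigcap_k\mathcal{T}_k$ the projections converge strongly: $\Pi(h | \mathcal{T}_k)\to\Pi(h |\bigcap_k\mathcal{T}_k)$ for each $h\in L^2(P_0)$. The standard argument observes that $\|\Pi(h | \mathcal{T}_k)\|^2$ is nonincreasing and bounded below, hence convergent, and that for $m\ge k$ one has $\Pi(h | \mathcal{T}_m)=\Pi(\Pi(h | \mathcal{T}_k) | \mathcal{T}_m)$, so that $\|\Pi(h | \mathcal{T}_k)-\Pi(h | \mathcal{T}_m)\|^2=\|\Pi(h | \mathcal{T}_k)\|^2-\|\Pi(h | \mathcal{T}_m)\|^2\to0$; the resulting Cauchy limit is readily identified with the projection onto the intersection. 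It is here that the spanning condition $\bigcap_k\mathcal{T}_k=\mathcal{T}$ is used: it is exactly what forces the limit projection to be $\Pi(\cdot | \mathcal{T})$ rather than a projection onto a strictly larger space. Applying this componentwise to $\tilde\psi_1$ yields $\tilde\psi_k\to\tilde\psi_\infty$ in $L^2(P_0)$.

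Finally, $L^2$ convergence of each component together with continuity of the inner product gives entrywise convergence of the Gram matrices, $\tilde V_k\to\tilde V$. Since the tangent spaces decrease, $\tilde V_k$ is nonincreasing in the positive-semidefinite order with limit $\tilde V$, so under the identification and regularity conditions ensuring $\tilde V$ is nonsingular one has $\tilde V_k\succeq\tilde V\succ0$ uniformly, and continuity of matrix inversion delivers $I_{\theta_0}(\mathcal{P}_k)=\tilde V_k^{-1}\to\tilde V^{-1}=I_{\theta_0}(\mathcal{P})$, which is the claim.

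I expect the main obstacle to be the first step: cleanly justifying that differentiability of $\psi$ is inherited by every submodel and that the efficient influence function there is the projection of the fixed gradient $\tilde\psi_1$ onto the corresponding tangent space, while carefully handling the distinction between the tangent cone $\mathcal{\dot{P}}_{k,P_{0}}$ (governing differentiability) and the tangent space $\mathcal{T}_k$ (governing the projection and the spanning condition). The projection-convergence step is routine Hilbert-space analysis, and the concluding inversion is elementary once nonsingularity of $\tilde V$ is granted; the entire role of the spanning hypothesis is to pin the limit of the projections down to $\mathcal{T}$.
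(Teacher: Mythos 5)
Your proof is correct and follows essentially the same route as the paper's: both reduce the claim to the strong convergence of projections of a fixed gradient onto the decreasing tangent spaces $\mathcal{T}_k$, with the spanning condition $\bigcap_k\mathcal{T}_k=\mathcal{T}$ identifying the limit, and then invert the resulting Gram matrices. The only differences are cosmetic — the paper projects the Riesz representer $\overline{\psi}$ of $\dot\psi$ on all of $L^{2}(P_0)$ rather than the efficient influence function of $\mathcal{P}_1$ (equivalent by iterated projections), and it cites Lemma 4.5 of Hansen and Sargent (1991) for the projection-convergence step that you prove directly.
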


\medskip

For the definition of the Fisher information $I_{\theta _{0}}\left( \mathcal{%
P}\right) $ on $\theta _{0}=\psi \left( P_{0}\right) $ in the model $%
\mathcal{P}$ we refer to Bickel, Klaassen, Ritov and Wellner (1993) or van
der Vaart (1998); see also Newey (1990). When the models $\mathcal{P}_{k}$, $%
k\in \mathbb{N}^{\ast }$, are defined by an increasing number of moment
conditions with the same conditioning vectors, condition $\left( \ref%
{spaning cond 2}\right) $ is exactly the so-called spanning condition of
Newey~(2004).

\medskip

\begin{remark}
\label{remarca_isteata} Even if $\bigcap_{k=1}^{\infty }\mathcal{P}_{k}=%
\mathcal{P}$, condition $\left( \ref{spaning cond 2}\right) $ is not
necessarily fulfilled. To see this, consider a symmetric density $f_{0}$ on
the real line and let $s_1$, $s_2$ be two odd functions such that $%
|s_1|,|s_2|\leq 1$ (e.g. $s_{l}( x) =x^{2l-1} \mathbf{I}_{\{|x|\leq 1\}}$, $%
l=1,2$). For any $k\in\mathbb{N}^{\ast }$ and $t\in [ -1,1]$, define
\begin{equation*}
f_{t}( x) = f_{0}( x)[ 1+t\ s_{2}( x) ]
 ,\qquad
f_{t;k}( x) =k f_{0}( k x) [ 1+t\ s_{1}( x)] 
\end{equation*}
and consider the following models defined by theirs densities with respect
to $\lambda _{\mathbb{R}}$ the Lebesgue measure on the real line~: $\mathcal{%
Q}_{k} =\left\{ f_{t;k}\cdot \lambda _{\mathbb{R}}: t\in \left[ -1,1\right]
\right\}$, $k\in \mathbb{N}^{\ast },$ and
\begin{equation*}
\mathcal{P} =\left\{ f_{t}\cdot \lambda _{\mathbb{R}}: t\in \left[ -1,1%
\right] \right\} , \ \qquad \mathcal{P}_{k} =\mathcal{P}\cup
\bigcup\limits_{m=k}^{\infty }\mathcal{Q}_{m} ,\quad k\in \mathbb{N}^{\ast }.
\end{equation*}
Then we have%
\begin{equation*}
\mathcal{P}_{1}\supset \mathcal{P}_{2}\supset \ldots \supset \mathcal{P}%
_{k}\supset \mathcal{P}_{k+1}\supset \ldots \supset
\bigcap\limits_{k=1}^{\infty }\mathcal{P}_{k}=\mathcal{P} .
\end{equation*}%
To describe the corresponding tangent spaces, notice that%
\begin{equation*}
\forall k\geq 1,\;\;\partial _{t}\left. \log f_{t;k}\left( x\right)
\right\vert _{t=0}=s_{1}\left( x\right) \quad \text{and}\quad \partial
_{t}\left. \log f_{t}\left( x\right) \right\vert _{t=0}=s_{2}\left( x\right)
,
\end{equation*}%
and thus $\mathcal{\dot{P}} =\left\{ a\, s_{2}\left( x\right) : a\in \mathbb{%
R}\right\} ,$
\begin{equation*}
\mathcal{\dot{P}}_{k} =\left\{ a\, s_{2}\left( x\right) : a\in \mathbb{R}%
\right\} \cup \left\{ b\, s_{1}\left( x\right) : b\in \mathbb{R}\right\}
,\quad k\in \mathbb{N}^{\ast }.
\end{equation*}
Then $\mathcal{T} =\left\{ a\, s_{2}\left( x\right) : a\in \mathbb{R}%
\right\} ,$
\begin{equation*}
\mathcal{T}_{k} =\left\{ a\, s_{2}\left( x\right) +b\, s_{1}\left( x\right)
: a,b\in \mathbb{R}\right\} ,\quad k\in \mathbb{N}^{\ast } .
\end{equation*}
This shows that
\begin{equation*}
\bigcap\limits_{k=1}^{\infty }\mathcal{\dot{P}}_{k}\varsupsetneq \mathcal{%
\dot{P}}\qquad \text{and} \qquad \bigcap\limits_{k=1}^{\infty }\mathcal{T}%
_{k}\varsupsetneq \mathcal{T},
\end{equation*}
even if the decreasing sequence of models $\left\{ \mathcal{P}_{k}\right\}
_{k\in \mathbb{N}^{\ast }}$ is such that $\bigcap_{k=1}^{\infty }\mathcal{P}%
_{k}=\mathcal{P}. $
\end{remark}

\subsection{Efficiency bound}

The main idea we follow to derive the semiparametric efficiency bound for
the parameter $\theta_0$ is to transform the finite number of conditional
moment restrictions (\ref{model_I}) in a countable number of unconditional
(marginal) moment restrictions. Next, for any finite subset of these
unconditional moment restrictions, one could easily obtain the Fisher
information bound. Eventually, one may expect to obtain the semiparametric
efficiency bound for the model (\ref{model_I}) as the limit of the
efficiency bounds for a decreasing sequence of models defined by an
increasing sequence of finite subsets of unconditional moment restrictions.
Remark \ref{remarca_isteata} proves that in general this intuition is not
correct. However, Lemma \ref{key_lemma} states that this intuition becomes
correct under the additional condition (\ref{spaning cond 2}).

Let us introduce some more notation. If $\zeta:\mathcal{Z}\times
\Theta\rightarrow \mathbb{R}^m$, $m\geq 1$, is some given function of $Z$
and $\theta$ and $X$ is some subvector of $Z$, we denote
\begin{equation}  \label{eq_biz}
E[\partial_{\theta ^{\prime}}\zeta \mid X] = E[\partial_{\theta
^{\prime}}\zeta(Z,\theta_0)\mid X]= \left. \frac{\partial}{\partial
\theta^\prime} E[\zeta(Z,\theta_0)\mid X] \right|_{\theta=\theta_0}\in\mathbb{%
R}^{d}\times \mathbb{R}^{m},
\end{equation}
when such derivatives of $\theta\mapsto E[\zeta(Z,\theta)\mid X]$ exist. A similar notation will be used with the
conditional expectation $E(\cdot \mid X)$ replaced by the marginal
(unconditional) expectation with respect to the law of $Z$. Let us point out
that the maps $\theta\mapsto \zeta(z,\theta)$ may not be everywhere
differentiable. Next, let us define
\begin{equation*}
\underline{g}=\left( g_{1}^\prime,\cdots, g_{J}^\prime\right)^\prime \in
\mathbb{R}^{p}=\mathbb{R}^{p_{1}+\ldots +p_{J}},
\end{equation*}
and let $\underline{X}$ denote the vector of all components of $Z$ contained
in the subvectors $X^{(j)}$, $j=1,\cdots,J.$

For the purpose of transforming conditional moments in unconditional
versions, consider a countable set of squared integrable functions $\mathcal{%
W}=\left\{ w_{k}: k\in \mathbb{N}^{\ast } \right\} \subset L^{2}\left(
P_0\right) $ such that $\overline{\mathrm{lin}} \mathcal{W} = L^{2}\left(
P_0\right)$, that is the linear span of $\mathcal{W}$ is dense in $%
L^{2}\left( P_0\right)$. For any $s\in \mathbb{N}^{\ast }$, define a $%
p\times p-$diagonal matrix
\begin{eqnarray*}
\underline{w}_{s}\left( \underline{X}\right) &=&diag(\underset{p_{1}}{%
\underbrace{E\left[ w_{s}\left( Z\right) |X^{\left( 1\right) }\right]
,\ldots ,E\left[ w_{s}\left( Z\right) |X^{\left( 1\right) }\right] }},\ldots
\\
&&\qquad \qquad \qquad \qquad \ldots ,\underset{p_{J}}{\underbrace{E\left[
w_{s}\left( Z\right) |X^{\left( J\right) }\right] ,\ldots ,E\left[
w_{s}\left( Z\right) |X^{\left( J\right) }\right] }}).
\end{eqnarray*}%
Next, for any $k\in \mathbb{N}^{\ast }$, let
\begin{equation*}
\underline{w}^{(k) }(\underline{X}) = \left( \underline{w}_{1}( \underline{X}%
), \cdots, \underline{w}_{k}(\underline{X})\right)^\prime \in \mathbb{R}^{k
p} \times \mathbb{R}^{p} \quad \text{and}\quad \underline{g}_{k}^{w}\left(
Z,\theta \right) =\underline{w}^{\left( k\right) }\left( \underline{X}%
\right) \underline{g}\left( Z,\theta \right) \in \mathbb{R}^{kp}.
\end{equation*}
Moreover, let $I_{\theta _{0}}^{\left( k\right) }$ be the Fisher information
on $\theta _{0}$ in the model%
\begin{equation}
E\left[ \underline{g}_{k}^{w}\left( Z,\theta \right) \right] =0,
\label{model_III}
\end{equation}%
that is
\begin{equation*}
I_{\theta _{0}}^{\left( k\right) }= E\left[ \left( \partial_{\theta ^{\prime
}}\underline{g}_{k}^{w}\left( Z,\theta _{0}\right) \right) ^{\prime }\right]
V^{-}\left[ \underline{g}_{k}^{w}\left( Z,\theta _{0}\right) \right] E\left[
\partial_{\theta ^{\prime }}\underline{g}_{k}^{w}\left( Z,\theta _{0}\right) %
\right] .
\end{equation*}
See Chamberlain~(1987), Newey (2001), see also Chen and Pouzo (2009) for the
non-smooth case.

We can state now the main result of the paper.

\begin{theorem}
\label{Main_Result} Under the Assumptions $T$ and $SP$ in the Appendix, the
information bound $I_{\theta _{0}}$ on $\theta _{0}$ at $P_{0}$ in model $%
\left( \ref{model_I}\right) $ is given by
\begin{equation*}
I_{\theta _{0}}=\lim_{k\rightarrow \infty }I_{\theta _{0}}^{\left( k\right)
},
\end{equation*}%
where, for any $k\in\mathbb{N}^{\ast }$, $I_{\theta _{0}}^{\left( k\right) }$
is the Fisher information on $\theta _{0}$ in the model defined as in $%
\left( \ref{model_III}\right) $.
\end{theorem}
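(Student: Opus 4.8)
The plan is to derive the theorem from Lemma \ref{key_lemma}, by exhibiting a decreasing family of unconditional moment models $\{\mathcal{P}_k\}$ whose informations are the $I_{\theta_0}^{(k)}$ and whose limit, together with the conditional model $\mathcal{P}$ of (\ref{model_I}), satisfies the hypotheses (\ref{decreasing}) and (\ref{spaning cond 2}). First I would take $\mathcal{P}_k$ to be the maximal semiparametric model consisting of the laws $P$ under which the unconditional restrictions (\ref{model_III}), $E_P[\underline{g}_k^w(Z,\theta)]=0$, hold for some $\theta=\psi(P)$. By the efficiency-bound formula for unconditional moment restrictions (Chamberlain~(1987), Newey~(2001)), the Fisher information on $\theta_0$ in $\mathcal{P}_k$ equals precisely the explicit expression for $I_{\theta_0}^{(k)}$ displayed above; Assumption $T$ supplies the regularity that makes $\psi$ well defined and differentiable at $P_0$ on $\mathcal{P}_1$ (local identification and the rank conditions) and that validates the tangent-space computations below.

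Next I would check the chain (\ref{decreasing}). Since $\underline{g}_{k+1}^w$ stacks $\underline{g}_{k}^w$ with an extra block formed from $w_{k+1}$, imposing the $(k+1)$-st batch of moments only shrinks the set of admissible laws, so $\mathcal{P}_k\supset\mathcal{P}_{k+1}$. Moreover every entry of $\underline{g}_k^w$ is of the form $E[w_s\mid X^{(j)}]\,g_{j,l}(Z,\theta)$, so the law of iterated expectations gives $E[E[w_s\mid X^{(j)}]\,g_{j,l}]=E[E[w_s\mid X^{(j)}]\,E[g_{j,l}\mid X^{(j)}]]$; hence any $P$ obeying the conditional restrictions (\ref{model_I}) obeys all the unconditional ones, i.e. $\mathcal{P}\subset\bigcap_k\mathcal{P}_k$. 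This places us exactly in the framework of Lemma \ref{key_lemma}.

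The heart of the argument, and the step I expect to be the main obstacle, is the spanning condition (\ref{spaning cond 2}), $\bigcap_k\mathcal{T}_k=\mathcal{T}$. The inclusion $\mathcal{T}\subset\bigcap_k\mathcal{T}_k$ is automatic from $\mathcal{P}\subset\mathcal{P}_k$, so, passing to orthogonal complements in $L^2(P_0)$, it remains to prove $\overline{\bigcup_k\mathcal{T}_k^{\perp}}=\mathcal{T}^{\perp}$. Under Assumption $T$ I would identify $\mathcal{T}_k^{\perp}$ as the finite-dimensional span of those linear combinations of the components of $\underline{g}_k^w$ that are $L^2(P_0)$-orthogonal to the score directions for $\theta$, and $\mathcal{T}^{\perp}$ as the corresponding closed span of instrumented moment functions $\sum_j a_j(X^{(j)})'g_j$ over square-integrable instruments $a_j$. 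The decisive point is that the blocks generating $\mathcal{T}_k^{\perp}$ use precisely the instruments $E[w_s\mid X^{(j)}]$, $s\le k$; since $\overline{\mathrm{lin}}\,\mathcal{W}=L^2(P_0)$, these conditional expectations are dense among all square-integrable functions of $X^{(j)}$, so as $k\to\infty$ every instrumented moment function is approximable from $\bigcup_k\mathcal{T}_k^{\perp}$. Assumption $SP$ is what rules out the pathology of Remark \ref{remarca_isteata}, where $\bigcap_k\mathcal{P}_k=\mathcal{P}$ yet the tangent spaces fail to collapse, and makes this approximation legitimate once the $\theta$-score directions are accounted for. The delicate parts are the exact identification of the two orthocomplements and the control, in the limit, of the subtraction of the $\theta$-directions, so that no direction is spuriously lost or gained.

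Finally, with (\ref{decreasing}) and (\ref{spaning cond 2}) in hand and $\psi$ differentiable at $P_0$ relative to $\mathcal{\dot{P}}_{1,P_{0}}$, Lemma \ref{key_lemma} yields $I_{\theta_0}(\mathcal{P})=\lim_k I_{\theta_0}(\mathcal{P}_k)$. Combining this with the identity $I_{\theta_0}(\mathcal{P}_k)=I_{\theta_0}^{(k)}$ from the first step and with $I_{\theta_0}(\mathcal{P})=I_{\theta_0}$, the information in the conditional model of interest, gives the claimed $I_{\theta_0}=\lim_k I_{\theta_0}^{(k)}$.
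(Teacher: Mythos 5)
Your overall architecture matches the paper's: define $\mathcal{P}_k$ by the unconditional restrictions (\ref{model_III}), verify the chain (\ref{decreasing}) via iterated expectations, reduce the spanning condition (\ref{spaning cond 2}) to the density of the projected instruments $E[w_s\mid X^{(j)}]$ in $L^2(P_{X^{(j)}})$, and invoke Lemma \ref{key_lemma}. That part is sound and is essentially what the paper does (the paper works directly with the tangent-space equations rather than with orthocomplements, but for closed subspaces the two formulations are equivalent).

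The genuine gap is the step you yourself flag as ``delicate'' and then do not carry out: the exact identification of $\mathcal{T}=\mathcal{T}(\mathcal{P},P_0)$ (equivalently of $\mathcal{T}^\perp$). The density argument on the instruments only shows that the \emph{equation sets} $\{s: E[g_j s\mid X^{(j)}]=0,\ \forall j\}$ and $\{s: E[g_j s\,\overline{w}_r^j(X^{(j)})]=0,\ \forall j,r\}$ coincide in the limit; one still must prove that the first set \emph{is} the nonparametric part of the tangent space of the conditional model, i.e.\ that every mean-zero, square-integrable $s$ with $E[g_j s\mid X^{(j)}]=0$ arises as the score of a one-dimensional submodel of (\ref{model_I}). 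For bounded $s$ this follows from the path $f_t=(1+ts)f_0$, but the extension to unbounded $s$ is where the real work lies: the set in question is an intersection $\mathcal{T}_1'\cap\mathcal{T}_2'$ of closed subspaces, and density of bounded functions in each $\mathcal{T}_i'$ does \emph{not} imply density in the intersection — there is no general Hilbert-space result to that effect, as the paper emphasizes. The paper's proof devotes most of its length to exactly this point, constructing from an arbitrary bounded approximant $t_n$ of $s$ a corrected sequence $u_n=t_n-\sum_i E(t_n g_i'\mid X^{(i)})E^{-1}(b_i g_i'\mid X^{(i)})b_i$ that lies exactly in $\mathcal{M}\cap\mathcal{T}'$ and still converges to $s$; this is precisely what Assumption $T$ (the existence of the bounded functions $b_1,b_2$ with $E(g_i b_i'\mid X^{(i)})$ invertible and $E(g_i b_j'\mid X^{(1)},X^{(2)})=0$ for $i\neq j$) is for. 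You instead attribute Assumption $T$ to differentiability of $\psi$ and Assumption $SP$ to ruling out the pathology of Remark \ref{remarca_isteata}; both attributions are off (SP gives the semiparametric structure and differentiability; T powers the density argument), and without the $u_n$ construction the central step of the proof is missing.
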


\begin{proof}
For any $k\in \mathbb{N}^{\ast }$, let $\mathcal{P}_{k}$ be the model
defined by equation $\left( \ref{model_III}\right) $ and $\mathcal{P}$ the
model defined by equation $\left( \ref{model_I}\right). $ Then%
\begin{equation*}
\mathcal{P}_{1}\supset \mathcal{P}_{2}\supset \ldots \supset \mathcal{P}%
_{k}\supset \mathcal{P}_{k+1}\supset \ldots \supset
\bigcap\limits_{k=1}^{\infty }\mathcal{P}_{k}=\mathcal{P}.
\end{equation*}%
Hence the stated result is a direct consequence of Lemma~\ref{key_lemma},
provided that condition $\left( \ref{spaning cond 2}\right) $ holds for the
tangent spaces of $\mathcal{P}$ and $\mathcal{P}_{k}$, $k\in \mathbb{N}%
^{\ast }$, at $\theta _{0}$.

For each $j\in \left\{ 1,\ldots ,J\right\} $, any $z\in\mathcal{Z}\subset%
\mathbb{R}^q$ could be partitioned in two subvectors $y^{\left( j\right) }\in%
\mathbb{R}^{q-q_j}$ and $x^{\left( j\right) }\in\mathbb{R}^{q_j}$ with $%
x^{\left( j\right) }$ in the support of $X^{(j)}$. Let $P_{X^{\left(
j\right) }}$ denote the law of $X^{(j)}$. Model $\mathcal{P}$ is then
defined by the set of conditions
\begin{equation}
\int g_{j}\left( z,\theta \right) \ f\left( z,\theta \right) \ dy^{\left(
j\right) }=0\quad P_{X^{( j) }}-a.s.,\quad j\in \left\{ 1,\ldots ,J\right\} ;
\label{*}
\end{equation}%
for a fixed $k$, the model $\mathcal{P}_{k}$ is defined by
\begin{equation}
\int g_{j}\left( z,\theta \right) \ f\left( z,\theta \right) \ \overline{w}%
_{s}^{j}\left( x^{\left( j\right) }\right) \ dz=0,\quad j\in \left\{
1,\ldots ,J\right\} ,\quad s\in \left\{ 1,\ldots ,k\right\} ,  \label{**}
\end{equation}
where
\begin{equation*}
\overline{w}_{s}^{j}\left( x^{\left( j\right) }\right) =E\left[ w_{s}\left(
Z\right) |X^{\left( j\right) }=x^{\left( j\right) }\right] .
\end{equation*}%
Consider now a regular parametric family $\left\{ f_{t}\right\} _{t\in
\left( -\varepsilon ,\varepsilon \right) }$ of densities satisfying $( \ref%
{*}) $, that means that there exist parameters $\theta _{t}\in \Theta $,
such that, for any $t\in \left( -\varepsilon ,\varepsilon \right) $ and $%
P_{X^{\left( j\right) }}-a.s.$,%
\begin{equation}
\int g_{j}\left( z,\theta _{t}\right) \ f_{t}\left( z,\theta _{t}\right) \
dy^{\left( j\right) }=0,\quad \forall j\in \left\{ 1,\ldots ,J\right\} .
\label{***}
\end{equation}%
Let%
\begin{eqnarray*}
\dot{\theta}&=&\left. \frac{\partial \theta _{t}}{\partial t}\right\vert
_{t=0}, \\
&& \\
s &=&\left. \partial _{t}\log f_{t}\left( Z,\theta _{0}\right) \right\vert
_{t=0},\quad S_{\theta _{0}}=\left. \partial _{\theta }\log f\left( Z,\theta
\right) \right\vert _{\theta =\theta _{0}},\quad \\
&& \\
s_{1} &=&\left. \partial _{t}\log f_{t}\left( Z,\theta _{t}\right)
\right\vert _{t=0}=s+S_{\theta _{0}}^{\prime }\ \dot{\theta}.
\end{eqnarray*}%
Here and in the following, the derivatives of the log-densities are to be
understood in the mean square sense, see Ibragimov and Has'minskii (1981),
page 64. Differentiating with respect to $t$ in $\left( \ref{***}\right) $
we obtain
\begin{equation}
E\left[ \partial_{\theta ^{\prime }}g_{j}\left( Z,\theta _{0}\right)
|X^{\left( j\right) }\right] \ \dot{\theta}\left. +\right. E\left[
g_{j}\left( Z,\theta _{0}\right) \ s_{1} (Z) \mid X^{\left( j\right) }\right]
\left. =\right. 0,\quad \forall j\left. \in \right. \left\{ 1,\ldots
,J\right\} .  \label{tangent space equations}
\end{equation}%
Since $\dot{\theta}\in \mathbb{R}^{d}$ could be arbitrary, we deduce that
for each $j \in \{ 1,\ldots ,J\}$,
\begin{equation*}
E\left[ \partial_{\theta ^{\prime }}g_{j}\left( Z,\theta _{0}\right) \mid
X^{\left( j\right) }\right] +E\left[ g_{j}\left( Z,\theta _{0}\right)
S_{\theta _{0}}^{\prime }(Z) \mid X^{\left( j\right) }\right] \left.
=\right. 0, \quad E\left[ g_{j}\left( Z,\theta _{0}\right) s(Z) \mid
X^{\left( j\right) }\right] \left. =\right. 0.
\end{equation*}
The last equation and the expression of the score functions $s_1$ suggest a
tangent space $\mathcal{T}=\mathcal{T}\left( \mathcal{P},P_{0}\right) $ of
the form%
\begin{equation}  \label{tangent space}
\mathcal{T} =\overline{\mathrm{lin}} \, S_{\theta_0 } +\left\{ s : E\left(
s^{2}\right) <\infty ,\ E\left( s\right) =0,\ E\left[ g_{j}\left( Z,\theta
_{0}\right) s(Z) \mid X^{\left( j\right) }\right] = 0, \;\; 1\leq j\leq
J\right\} .
\end{equation}
On the other hand, the tangent space $\mathcal{T}_{k}=\mathcal{T}\left(
\mathcal{P}_{k},P_{0}\right) $ corresponding to the model defined by the
equations $\left( \ref{**}\right) $ is given by vectors satisfying the
unconditional moment equations
\begin{equation}  \label{tangent space k}
E\left[ \partial_{\theta ^{\prime }}g_{j}\left( Z,\theta _{0}\right) \
\overline{w}_{r}^{j}\left( X^{\left( j\right) }\right) \right] \ \dot{\theta}%
+E\left[ g_{j}\left( Z,\theta _{0}\right) s_{1}(Z)\ \overline{w}%
_{r}^{j}\left( X^{\left( j\right) }\right) \right] = 0,
\end{equation}%
$1\leq j \leq J,$ $1\leq r \leq k.$ This yields the tangent spaces
\begin{eqnarray*}
\mathcal{T}_{k} &=&\overline{\mathrm{lin}} \, S_{\theta_0 } +\left\{ s :
E(s^{2}) <\infty ,\ E(s) =0,\ E\left[ g_{j}\left( Z,\theta _{0}\right) \
s(Z)\ \overline{w}_{r}^{j}\left( X^{\left( j\right) }\right) \right] \left.
=\right. 0,\right. \\
&&\qquad \qquad \qquad \qquad \qquad \qquad \qquad \qquad \qquad \qquad
\qquad \left. \;\;\; \forall \ 1\leq j\leq J,~\forall \ 1\leq r\leq
k\right\} ;
\end{eqnarray*}%
see for instance Example 3, section 3.2 in Bickel, Klaassen, Ritov and
Wellner (1993). Since the functions $w_{k}\left( Z\right) $, $k\in \mathbb{N}%
^{\ast }$, span $L^{2}\left( P_0 \right) $, their projections $\overline{w}%
_{k}^{j}\left( X^{\left( j\right) }\right) $ on $L^{2}\left( P_{X^{\left(
j\right) }}\right) $, $k\in \mathbb{N}^{\ast }$, will span $L^{2}\left(
P_{X^{\left( j\right) }}\right) $. Consequently, equations $\left( \ref%
{tangent space equations}\right) $ are satisfied if and only if equations $%
\left( \ref{tangent space k}\right) $ are satisfied for any $k\in \mathbb{N}%
^{\ast }$. In other words, the equivalent of the spanning condition of
Newey~(2004), see our equation (\ref{spaning cond 2}) above, is satisfied
and we can apply Lemma~\ref{key_lemma} to conclude that $I_{\theta
_{0}}=\lim\limits_{k\rightarrow \infty }I_{\theta _{0}}^{\left( k\right) }$.

The proof will be complete if we show that the tangent space $\mathcal{T}=%
\mathcal{T}\left( \mathcal{P},P_{0}\right)$ is indeed the set described in
equation (\ref{tangent space}). Consider for simplicity that $J=2$, the
general case could be handled similarly. It is quite easy to see that
equations $\left( \ref{tangent space equations}\right) $ guarantees the
inclusion ``$\subset$'' in display $\left( \ref{tangent space}\right) $. To
show the reverse inclusion, it suffices to prove that $\mathcal{T}^{\prime
}\subset \mathcal{T}$, where
\begin{eqnarray*}
\mathcal{T}^{\prime } &=&\mathcal{T}^{\prime }\left( \mathcal{P}%
,P_{0}\right) =\left\{ s:E( s^{2}) <\infty ,\ E( s ) =0,\right. \\
&&\qquad \qquad \qquad \left. E\left[ g_{1}\left( Z,\theta _{0}\right) \
s(Z) \mid X^{\left( 1\right) }\right] \left. =\right. 0,\ E\left[
g_{2}\left( Z,\theta _{0}\right) \ s(Z) \mid X^{\left( 2\right) }\right]
\left. =\right. 0\right\} .
\end{eqnarray*}%
Let $f_0$ denote the true density of the vector $Z$. Take $s\in \mathcal{T}%
^{\prime }$ and suppose for the moment that $s$ is bounded. Then, for real
numbers $t$ with sufficiently small absolute values, the functions $f_{t}=
\left( 1+t\cdot s\right) f_{0}$ are densities on $\mathcal{Z}$ and if $%
E_{f_{t}}$ denotes expectation with respect to the law defined by $f_t$,
\begin{equation*}
E_{f_{t}}\left[ g_{j}\left( Z,\theta _{0}\right) \ a\left( X^{\left(
j\right) }\right) \right] =E\left[ g_{j}\left( Z,\theta _{0}\right) \
a\left( X^{\left( j\right) }\right) \right] + t \ E\left[ g_{j}\left(
Z,\theta _{0}\right) \ s\left( Z\right) \ a\left( X^{\left( j\right)
}\right) \right] =0,
\end{equation*}%
for any square-integrable function $a( X^{( j) }) $, so that $E_{f_{t}}[
g_{j}\left( Z,\theta _{0}\right) |X^{\left( j\right) }] =0,$ $j=1,2.$
Moreover,
\begin{equation*}
\partial _{t}\left. \log f_{t}\right\vert _{t=0}=\partial _{t}\left. \log
\left( 1+t\cdot s\right) \right\vert _{t=0}=s,
\end{equation*}%
which means that the family of densities $\left\{ f_{t}\right\} _{\left\vert
t\right\vert <\varepsilon }$ defines a submodel of model $\left( \ref%
{model_I}\right) $ for which the tangent vector at $t=0$ is exactly $s$.
Next, we have to extend the argument to unbounded functions $s$. If $%
\mathcal{M}\subset L^{2}\left( P_0\right) $ is the subspace of bounded
functions of $Z$, it remains to show that $\mathcal{M\cap T}^{\prime }$ is
dense in $\mathcal{T}^{\prime }$. One may consider this step obvious since
any unbounded square integrable function can be approximated by a sequence
of bounded functions, see for instance Ai and Chen (2003), page 1838. We
argue that this well-known approximation result cannot be directly applied
to our context, as it is also the case in other contexts considered in the
efficiency bounds literature. Indeed, here we are in the following
situation: we have two infinite-dimension closed subspaces $\mathcal{T}%
_{1}^{\prime }$ and $\mathcal{T}_{2}^{\prime }$ such that $\mathcal{T}%
^{\prime }=\mathcal{T}_{1}^{\prime }\cap \mathcal{T}_{2}^{\prime }$, $%
\overline{\mathcal{M\cap T}_{1}^{\prime }}=\mathcal{T}_{1}^{\prime }$ and $%
\overline{\mathcal{M\cap T}_{2}^{\prime }}=\mathcal{T}_{2}^{\prime }$, and
we need that $\overline{\mathcal{M\cap T}^{\prime }}=\mathcal{T}^{\prime }.$
To our best knowledge, there is no general mathematical result which would
allow us to claim that $\mathcal{M\cap T}^{\prime }$ is dense in $\mathcal{T}%
^{\prime }$ without any further argument. That is why we have to provide a
proof adapted to the case we consider herein. By Assumption~$T$ and the
subsequent remark, and equation (\ref{b_i_A}), there exist two bounded
vector functions $b_{1}$ and $b_{2}$ defined like in equation (\ref{b_i_A})
such that, for $i,j\in\{1,2\}$, $i\neq j$,
\begin{equation*}  \label{fct_b_cond}
E\left( g_{i}\ b_{i}^{\prime }\ |\ X^{\left( i\right) }\right) =0 \quad
\text{and} \quad \left\Vert E^{-1}\left( g_{i}\ b_{j}^{\prime }\ |\
X^{\left( 1\right) },X^{\left( 2\right) }\right) \right\Vert _{\infty }<1,
\end{equation*}
where $g_i = g_i(Z, \theta_0)$. Here and in the sequel, the norm of a vector
(or matrix) should be understand as the sum of componentwise norms. Since $%
\mathcal{M}$ is dense in $L^{2}\left( P_0\right) $, for a fixed $s\in
\mathcal{T}^{\prime }$ there exist a sequence $\left\{ t_{n}\right\}
_{n}\subset \mathcal{M}$ such that
\begin{equation*}
\left\Vert s-t_{n}\right\Vert _{L^{2}(P_0)}\underset{n\rightarrow \infty }{%
\longrightarrow }0.
\end{equation*}%
Define%
\begin{equation*}
u_{n}=t_{n}-E\left( t_{n}g_{1}^{\prime }\ |\ X^{\left( 1\right) }\right) \
E^{-1}\left( b_{1}g_{1}^{\prime }\ |\ X^{\left( 1\right) }\right) \
b_{1}-E\left( t_{n}g_{2}^{\prime }\ |\ X^{\left( 2\right) }\right) \
E^{-1}\left( b_{2}g_{2}^{\prime }\ |\ X^{\left( 2\right) }\right) \ b_{2}.
\end{equation*}%
It is clear that we can take $\left\{ t_{n}\right\} _{n}\subset \mathcal{M}$
such that
\begin{equation*}
\left\Vert E\left(t_n g_{1}\ |\ X^{\left( 1\right) }\right) \right\Vert
_{\infty }+\left\Vert E\left( t_n g_{2}\ |\ X^{\left( 2\right) }\right)
\right\Vert_\infty <\infty
\end{equation*}
and thus $u_{n}\in \mathcal{M}$. Then%
\begin{eqnarray}
E\left( g_{1}u_{n}^{\prime }\ |\ X^{\left( 1\right) }\right) &=&\underline{%
\underline{E\left( g_{1}t_{n}^{\prime }\ |\ X^{\left( 1\right) }\right) }}-%
\underline{E\left( g_{1}b_{1}^{\prime }\ |\ X^{\left( 1\right) }\right) }\
\underline{E^{-1}\left( g_{1}b_{1}^{\prime }\ |\ X^{\left( 1\right) }\right)
}\ \underline{\underline{E\left( g_{1}t_{n}^{\prime }\ |\ X^{\left( 1\right)
}\right) }}  \notag \\
&&-E\left[ g_{1}b_{2}^{\prime }\ E^{-1}\left( g_{2}b_{2}^{\prime }\ |\
X^{\left( 2\right) }\right) \ E\left( g_{2}t_{n}^{\prime }\ |\ X^{\left(
2\right) }\right) \ |\ X^{\left( 1\right) }\right]  \notag \\
&=&- E\left[ {}\right. \underset{=0}{\underbrace{E\left( g_{1}b_{2}^{\prime
}\ |\ X^{\left( 1\right) },X^{\left( 2\right) }\right) }}\ E^{-1}\left(
g_{2}b_{2}^{\prime }\ |\ X^{\left( 2\right) }\right) \ E\left(
g_{2}t_{n}^{\prime }\ |\ X^{\left( 2\right) }\right) \ |\ X^{\left( 1\right)
}\left. {}\right]  \notag \\
&=&0,  \label{ddge}
\end{eqnarray}%
and similarly,%
\begin{equation}  \label{ddge2}
E\left( g_{2}u_{n}^{\prime }\ |\ X^{\left( 2\right) }\right) =0.
\end{equation}%
Moreover,
\begin{eqnarray*}
s-u_{n} &=&s-t_{n}+t_{n}-u_{n} \\
&=&s-t_{n}+E\left[ \left( t_{n}-s\right) g_{1}^{\prime }\ |\ X^{\left(
1\right) }\right] \ E^{-1}\left( b_{1}g_{1}^{\prime }\ |\ X^{\left( 1\right)
}\right) \ b_{1} \\
&&+E\left[ \left( t_{n}-s\right) g_{2}^{\prime }\ |\ X^{\left( 2\right) }%
\right] \ E^{-1}\left( b_{2}g_{2}^{\prime }\ |\ X^{\left( 2\right) }\right)
\ b_{2},
\end{eqnarray*}%
which entails%
\begin{eqnarray*}
\left\Vert s-u_{n}\right\Vert _{L^{2}(P_0)} &\leq &\left\Vert
s-t_{n}\right\Vert _{L^{2}(P_0)}+\left\Vert E\left[ \left( t_{n}-s\right)
g_{1}^{\prime }\ |\ X^{\left( 1\right) }\right] \right\Vert
_{L^{2}(P_0)}\cdot \left\Vert b_{1}\right\Vert _{\infty } \\
&&+\left\Vert E\left[ \left( t_{n}-s\right) g_{2}^{\prime }\ |\ X^{\left(
2\right) }\right] \right\Vert _{L^{2}(P_0)}\cdot \left\Vert b_{2}\right\Vert
_{\infty }.
\end{eqnarray*}%
Noting that
\begin{eqnarray*}
\left\Vert E\left[ \left( t_{n}-s\right) g_{1}^{\prime }\ |\ X^{\left(
1\right) }\right] \right\Vert _{L^{2}(P_0)}^{2} &=&E\left\{ E^{2}\left[
\left( t_{n}-s\right) g_{1}^{\prime }\ |\ X^{\left( 1\right) }\right]
\right\} \\
(Cauchy-Schwarz)\quad &\leq &E\left\{ E^{2}\left[ \left( t_{n}-s\right) \ |\
X^{\left( 1\right) }\right] \ E^{2}\left( g_{1}^{\prime }\ |\ X^{\left(
1\right) }\right) \right\} \\
&\leq &\left\Vert E\left( g_{1}\ |\ X^{\left( 1\right) }\right) \right\Vert
_{\infty }^{2}\ E\left\{ E^{2}\left[ \left( t_{n}-s\right) \ |\ X^{\left(
1\right) }\right] \right\} \\
(Jensen)\quad &\leq &\left\Vert E\left( g_{1}\ |\ X^{\left( 1\right)
}\right) \right\Vert _{\infty }^{2}\ E\left\{ E\left[ \left( t_{n}-s\right)
^{2}\ |\ X^{\left( 1\right) }\right] \right\} \\
&\leq &\left\Vert E\left( g_{1}\ |\ X^{\left( 1\right) }\right) \right\Vert
_{\infty }^{2}\ \left\Vert t_{n}-s\right\Vert _{L^{2}(P_0)}^{2},
\end{eqnarray*}%
we finally obtain $\left\Vert s-u_{n}\right\Vert
_{L^{2}(P_0)}\longrightarrow 0$ as $n\rightarrow\infty.$ In particular,
deduce that $E(u_n)\rightarrow 0$. Now, since all the previous equations and
inequalities involving $u_n$ hold also with $u_n$ replaced by $u_n - E(u_n)$%
, deduce that $\left\{ u_{n} - E(u_n) \right\} _{n}\subset \mathcal{M}\cap
\mathcal{T}^{\prime }, $ which implies that $s\in \overline{\mathcal{M}\cap
\mathcal{T}^{\prime }}. $ Now the proof is complete.
\end{proof}

\bigskip

In the general theory of efficiency bounds, the semiparametric Fisher
information on a finite dimension parameter in a semiparametric model is the
infimum of the Fisher information over all its parametric submodels; see for
instance Newey (1990). For models defined by conditional moment equations,
Theorem \ref{Main_Result} shows that the same semiparametric Fisher
information can be alternatively obtained as the lower limit of the
semiparametric Fisher information in a sequence of decreasing supra-models.
The main reason for this is that with such decreasing sequence of
supra-models, the `spanning condition' (\ref{spaning cond 2}) holds true.
Moreover, since $L^2 (P_0)$ is a separable Hilbert space, Theorem \ref%
{Main_Result} can be restated under the following equivalent form.

\begin{corollary}
\label{corr_main_th} Under the conditions of Theorem \ref{Main_Result},
\begin{equation*}
I_{\theta _{0}}=\sup_{b\in \mathcal{B}}I_{\theta _{0}}\left( b\right) ,
\end{equation*}%
where%
\begin{equation*}
\mathcal{B}=\left\{ \left( b_{1}\left( X^{\left( 1\right) }\right) ,\ldots
,b_{J}\left( X^{\left( J \right) }\right) \right) :~b_{j,lk}\in L^{2}\left(
P_{X^{(j)}} \right) 1\leq l\leq d, 1\leq k \leq p_{j},~1\leq j\leq J\right\},
\end{equation*}%
so that any $b = b \left( \underline{X}\right) \in \mathcal{B}$ is a $%
d\times p-$matrix with random elements, and $I_{\theta _{0}}\left( b\right) $
is the Fisher information on $\theta _{0}$ in the model defined by the
marginal moment restrictions
\begin{equation}
E\left[ b_{j}\left( X^{\left( j\right) }\right) \ g_{j}\left( Z,\theta
\right) \right] =0,\quad j\in \left\{ 1,\ldots ,J\right\} ,
\label{model IV b}
\end{equation}%
model which can also be written under the compact form $E\left[ b\left(
\underline{X}\right) \ \underline{g}\left( Z,\theta \right) \right] =0. $
\end{corollary}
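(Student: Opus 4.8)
The plan is to prove the two Loewner-order relations $I_{\theta _{0}}(b)\preceq I_{\theta _{0}}$ for every $b\in \mathcal{B}$ and the existence of a sequence in $\mathcal{B}$ along which $I_{\theta _{0}}(b)$ converges to $I_{\theta _{0}}$; together these identify $I_{\theta _{0}}$ as the least upper bound claimed.

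First I would dispose of the upper bound. Fix $b\in \mathcal{B}$ and let $\mathcal{P}_{b}$ denote the model defined by the marginal restrictions (\ref{model IV b}). Since $E[b_{j}(X^{(j)})\,g_{j}(Z,\theta )]=E[b_{j}(X^{(j)})\,E(g_{j}(Z,\theta )\mid X^{(j)})]$, every law satisfying the conditional restrictions (\ref{model_I}) also satisfies (\ref{model IV b}), so $\mathcal{P}\subset \mathcal{P}_{b}$. Because the semiparametric information on $\theta _{0}$ is the infimum of the parametric Fisher informations over all regular parametric submodels, passing to the larger model $\mathcal{P}_{b}$ enlarges the collection of submodels and can therefore only decrease the infimum; thus $I_{\theta _{0}}(b)\preceq I_{\theta _{0}}$, and a fortiori $\sup_{b\in \mathcal{B}}I_{\theta _{0}}(b)\preceq I_{\theta _{0}}$.

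For attainment I would reuse the machinery of Theorem \ref{Main_Result}. Separability of $L^{2}(P_{0})$ furnishes the countable spanning family $\mathcal{W}=\{w_{k}\}$ and hence the models (\ref{model_III}) with informations $I_{\theta _{0}}^{(k)}$, for which Theorem \ref{Main_Result} gives $\lim_{k}I_{\theta _{0}}^{(k)}=I_{\theta _{0}}$. The key observation is that each $I_{\theta _{0}}^{(k)}$ is itself of the form $I_{\theta _{0}}(b^{(k)})$ for a suitable $b^{(k)}\in \mathcal{B}$: the model (\ref{model_III}) is a system of $kp$ unconditional moment equations $E[\underline{w}^{(k)}(\underline{X})\,\underline{g}(Z,\theta )]=0$, and standard optimal-GMM theory (Chamberlain 1987) shows that its information equals that of the $d$ optimally combined equations $E[A^{\ast }\underline{w}^{(k)}(\underline{X})\,\underline{g}(Z,\theta )]=0$, where $A^{\ast }$ is the constant $d\times kp$ optimal-instrument matrix. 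Writing $b^{(k)}:=A^{\ast }\underline{w}^{(k)}$, recall that each $\underline{w}_{s}$ is diagonal with its $j$-th diagonal block equal to $\overline{w}_{s}^{j}(X^{(j)})=E[w_{s}(Z)\mid X^{(j)}]$; consequently the $j$-th block of columns of $b^{(k)}$ is a finite linear combination of the $\overline{w}_{s}^{j}$, $s=1,\dots ,k$, hence a square-integrable function of $X^{(j)}$ alone. Therefore $b^{(k)}\in \mathcal{B}$ and $I_{\theta _{0}}(b^{(k)})=I_{\theta _{0}}^{(k)}$.

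Combining the two steps, $\sup_{b\in \mathcal{B}}I_{\theta _{0}}(b)\succeq I_{\theta _{0}}(b^{(k)})=I_{\theta _{0}}^{(k)}$ for every $k$, so letting $k\rightarrow \infty $ and invoking Theorem \ref{Main_Result} yields $\sup_{b}I_{\theta _{0}}(b)\succeq I_{\theta _{0}}$; together with the upper bound this gives the claimed equality. I expect the main obstacle to be the second step: one must verify that forming the optimal linear combination of the $kp$ unconditional moments loses no information (the GMM information-invariance, with due care when the moment variance is singular and a generalized inverse is used), and — more delicately — that the resulting optimal instrument respects the separate conditioning structure, i.e. that each block $b_{j}^{(k)}$ depends only on $X^{(j)}$, which is exactly what the diagonal block form of $\underline{w}^{(k)}$ guarantees.
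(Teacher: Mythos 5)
Your proof is correct and follows the route the paper intends: the paper offers no explicit proof of this corollary, presenting it as a restatement of Theorem \ref{Main_Result} via separability of $L^{2}(P_{0})$, and your two steps --- monotonicity of the information under enlargement of the model for the upper bound, and the optimal-GMM combination $b^{(k)}=A^{\ast}\underline{w}^{(k)}\in\mathcal{B}$ achieving $I_{\theta_{0}}^{(k)}$ for attainment --- supply exactly the details that restatement presupposes. No gaps: your care with the generalized inverse and with the block-diagonal structure of $\underline{w}^{(k)}$ (which guarantees each block $b_{j}^{(k)}$ is an $L^{2}$ function of $X^{(j)}$ alone) addresses the only delicate points.
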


\bigskip

\begin{remark}
We argue that, under further assumptions, the result of Theorem \ref%
{Main_Result} extends to the case where the unknown functions $g_{j}$ depend
also on a same unknown function $h$ of the observations and the parameter.
More precisely, when the model is defined by
\begin{equation}
E\left[ \widetilde{g}_{j}\left( Z,\theta ,h\left( Z,\theta \right) \right) \
|\ X^{\left( j\right) }\right] =0,\quad j=1,\ldots ,J,  \label{model IV}
\end{equation}%
where $\widetilde{g}_{j}:\mathbb{R}^{q}\mathbb{\times R}^{d}\mathbb{\times R}%
^{p_{h}}\rightarrow \mathbb{R}^{p_{j}}$, $j\in \left\{ 1,\ldots ,J\right\} $%
, are known. With the same notations used for defining $\underline{g}%
_{k}^{w} $, let
\begin{eqnarray*}
\underline{\widetilde{g}}_{k}^{w}\left( Z,\theta ,h\left( Z,\theta \right)
\right) &=&\underline{w}^{\left( k\right) }\left( \underline{X}\right) \;
\underline{\widetilde{g}}\left( Z,\theta ,h\left( Z,\theta \right) \right)
\in \mathbb{R}^{kp},\qquad \forall k\in \mathbb{N}^{\ast }, \\
&&
\end{eqnarray*}%
where $\underline{\widetilde{g}}=\left( \widetilde{g}_{1}^{\;\prime },\ldots
,\widetilde{g}_{J}^{\; \prime }\right)^{\prime } $ and let $\widetilde{I}%
_{\theta _{0}}^{\left( k\right) }$ be the Fisher information on $\theta _{0}$
in the model%
\begin{equation}
E\left[ \underline{\widetilde{g}}_{k}^{w}\left( Z,\theta ,h\left( Z,\theta
\right) \right) \right] =0;  \label{model V}
\end{equation}%
its expression as a solution of a variational problem can be found in
Chamberlain~(1992), Ai and Chen~(2003) or Chen and Pouzo~(2009).

Similar but more involved arguments can be invoked to show the following
result, which we state here as a conjecture: the information $\widetilde{I}%
_{\theta _{0}}$ on $\theta _{0}$ at $P_{0}$ in model $\left( \ref{model IV}%
\right) $ is given by
\begin{equation*}
\widetilde{I}_{\theta _{0}}=\lim_{k\rightarrow \infty }\widetilde{I}_{\theta
_{0}}^{\left( k\right) },
\end{equation*}%
where $\widetilde{I}_{\theta _{0}}^{\left( k\right) }$ is the Fisher
information on $\theta _{0}$ in model $\left( \ref{model V}\right) $.
\end{remark}

\bigskip

\section{Efficient estimation}

\label{Section : Efficient estimation}

To simplify the presentation, let us take $J=2$. To obtain an efficient estimator, a common way is to solve $\theta$ from the
efficient score equations; see van der Vaart (1998), section 25.8. By
definition, the efficient score is the componentwise projection of the score
$S_{\theta_0}$ on the orthogonal complement of the tangent space $\mathcal{T}%
=\mathcal{T}(\mathcal{P},P_0)$ defined in equation (\ref{tangent space}). In
the projection of $S_{\theta_0}$ on $\mathcal{T}^{\perp}$ only the nonparametric
part of the tangent space matters. Moreover, the projection of $S_{\theta_0}$
is componentwise. It is then common practice in the literature to identify $%
\mathcal{T}$ with the subspace of $\left\{ L^{2}\left( P_0\right) \right\}
^{ d} =\bigoplus_{k=1}^d L^{2}\left( P_0\right)$ obtained as the $d-$fold
cartesian product of the nonparametric part of $\mathcal{T}$. Here the
direct sum of Hilbert spaces is considered with the usual inner product $%
\langle (\phi_1,\cdots,\phi_d), (\psi_1,\cdots,\psi_d)\rangle = \langle
\phi_1,\psi_1 \rangle + \cdots + \langle \phi_d,\psi_d \rangle.$ Therefore
we will slightly change our notation for the tangent spaces. More precisely,
let us define
\begin{eqnarray*}
\mathcal{T} &=&\left\{ s\in \bigoplus_{k=1}^d L^{2}\left( P_0\right):\
E\left( s\right) =0,\ E\left( g_{i}(Z,\theta_0)s^{\prime }(Z)\ |\ X^{\left(
i\right) }\right) =0,\ i=1,2\right\} \\
&& \\
&=&\mathcal{T}_{1}\cap \mathcal{T}_{2},
\end{eqnarray*}%
where, for $i=1,2,$
\begin{equation*}
\mathcal{T}_{i} =\left\{ s\in \bigoplus_{k=1}^d L^{2}\left( P_0\right):\
E\left( s\right) =0,\ E\left( g_{i}(Z,\theta_0)s^{\prime }(Z)\ |\ X^{\left(
i\right) }\right) =0\right\} ,
\end{equation*}
so that%
\begin{equation*}
\mathcal{T}_{i}^{\perp } =\left\{ s\in \bigoplus_{k=1}^d L^{2}\left(
P_0\right):\ s(Z) =a_{i}\left( X^{\left( i\right) }\right) \
g_{i}(Z,\theta_0) \right\} .
\end{equation*}
Clearly, $\mathcal{T}^{\perp } =\overline{ \mathcal{T}_{1}^{\perp } + \mathcal{T}_{2}^{\perp }}.$

In general, the projection of $S_{\theta_0}$ on $\mathcal{T}^\perp$ is not explicit. To approximate this projection and to further build an asymptotically efficient estimator for model (\ref{model_I}), we use the
iterative (``backfitting'' or successive approximation) procedure considered in Theorem~A.4.2 of Bickel,
Klaassen, Ritov and Wellner (1993), page 438; BKRW hereafter. Let $H_{i}=%
\mathcal{T}_{i}^{\perp },$ $g_i = g(Z,\theta_0),$ $i=1,2,$ and let $%
E(\partial_\theta g_i^\prime) $ be the transposed of the matrix $%
E(\partial_{\theta^\prime} g_i) $ defined in equation (\ref{eq_biz}). The
steps of the procedure we propose are the following~:

\begin{enumerate}
\item Set $m=0$. Take $a_{1}^{\left( 0\right) }=0$.

\item Put $m=m+1$. Calculate
\begin{equation*}
\overline{S}_{\theta_0 }^{\left( m\right) }=a_{1}^{\left( m\right) }\left(
X^{\left( 1\right) }\right) \ g_{1}+a_{2}^{\left( m\right) }\left( X^{\left(
2\right) }\right) \ g_{2}
\end{equation*}%
where
\begin{eqnarray*}
a_{1}^{\left( m\right) }\left( X^{\left( 1\right) }\right) &=&a_{1}^{\left(
m\right) }\left( X^{\left( 1\right) },\theta_0\right) = -E\left( \partial
_{\theta }g_{1}^{\prime }\ |\ X^{\left( 1\right) }\right) \ V^{-}\left(
g_{1}\ |\ X^{\left( 1\right) }\right) \\
&& \\
&&\hspace{-2.5cm}+E\left[ E\left( \partial _{\theta }g_{2}^{\prime }\ |\
X^{\left( 2\right) }\right) \ V^{-}\left( g_{2}\ |\ X^{\left( 2\right)
}\right) \ g_{2}\ g_{1}^{\prime }\ |\ X^{\left( 1\right) }\right] \
V^{-}\left( g_{1}|X^{\left( 1\right) }\right) \\
&& \\
&&\hspace{-2.5cm}+E\left[ E\left[ a_{1}^{\left( m-1\right) }\left( X^{\left(
1\right) }\right) \ g_{1}\ g_{2}^{\prime }\ |\ X^{\left( 2\right) }\right] \
V^{-}\left( g_{2}|X^{\left( 2\right) }\right) \ g_{2}\ g_{1}^{\prime }\ |\
X^{\left( 1\right) }\right] \ V^{-}\left( g_{1}|X^{\left( 1\right) }\right)
\end{eqnarray*}%
and%
\begin{eqnarray*}
a_{2}^{\left( m\right) }\left( X^{\left( 2\right) }\right) &=& a_{2}^{\left(
m\right) }\left( X^{\left( 2\right) },\theta_0\right) = -E\left( \partial
_{\theta }g_{2}^{\prime }\ |\ X^{\left( 2\right) }\right) \ V^{-}\left(
g_{2}\ |\ X^{\left( 2\right) }\right) \\
&&-E\left[ a_{1}^{\left( m\right) }\left( X^{\left( 1\right) }\right) \
g_{1}\ g_{2}^{\prime }\ |\ X^{\left( 2\right) }\right] \ V^{-}\left(
g_{2}|X^{\left( 2\right) }\right) .
\end{eqnarray*}

\item Repeat from step~2 till the convergence of $\overline{S}_{\theta_0
}^{\left( m\right) }$.
\end{enumerate}

Let $\Pi \left( s|\mathcal{S}\right) $ denote the (componentwise) projection
of a vector $s\in \bigoplus_{k=1}^d L^{2}\left( P_0\right)$ on a subspace $%
\mathcal{S\subset }\bigoplus_{k=1}^d L^{2}\left( P_0\right)$. Theorem~A.4.2
(A) from BKRW directly yields the following result.

\begin{lemma}
\label{Iterative algorithm} Assume that the conditions of Theorem \ref%
{Main_Result} hold true. When $m\rightarrow\infty$,
\begin{equation*}
\overline{S}_{\theta_0 }^{\left( m\right) } = a_{1}^{( m) }( X^{(1) }) g_{1}
+ a_{2}^{( m) }( X^{( 2) }) g_{2}\longrightarrow \overline{S}_{\theta_0 } =
\Pi \left( S_{\theta_0 }|\mathcal{T}^{\perp }\right) =\Pi \left( S_{\theta_0
}|\overline{ H_{1}+H_{2}}\right)
\end{equation*}%
in $\bigoplus_{k=1}^d L^{2}\left( P_0\right)$, where $g_i = g(Z,\theta_0),$ $%
i=1,2.$
\end{lemma}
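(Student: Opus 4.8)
The plan is to recognise the three–step procedure as the backfitting (successive approximation) scheme associated with the two closed subspaces $H_1=\mathcal{T}_1^{\perp}$ and $H_2=\mathcal{T}_2^{\perp}$, for which we have already recorded that $\mathcal{T}^{\perp}=\overline{H_1+H_2}$, and then to invoke Theorem~A.4.2(A) of BKRW. First I would make the projection operators explicit. Since $E\left(g_i\mid X^{(i)}\right)=0$, for any $s\in\bigoplus_{k=1}^{d}L^2(P_0)$ the element $E\left(s\,g_i'\mid X^{(i)}\right)V^{-}\left(g_i\mid X^{(i)}\right)g_i$ lies in $H_i=\{a_i(X^{(i)})g_i\}$ and its residual is orthogonal to every $a_i(X^{(i)})g_i$, because $E\left[\left(s-E(s\,g_i'\mid X^{(i)})V^{-}(g_i\mid X^{(i)})g_i\right)g_i'\mid X^{(i)}\right]=0$; hence it is the orthogonal projection $\Pi(s\mid H_i)$. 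In particular, using the tangent–space identity $E\left(g_i\,S_{\theta_0}'\mid X^{(i)}\right)=-E\left(\partial_{\theta'}g_i\mid X^{(i)}\right)$ obtained by differentiating the moment restrictions in the proof of Theorem~\ref{Main_Result}, one gets $\Pi\left(S_{\theta_0}\mid H_i\right)=-E\left(\partial_{\theta}g_i'\mid X^{(i)}\right)V^{-}\left(g_i\mid X^{(i)}\right)g_i$, which is exactly the leading term in the definitions of $a_1^{(m)}$ and $a_2^{(m)}$.

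Next I would verify, by a routine substitution, that the displayed recursion is precisely the backfitting iteration. Writing $f_i^{(m)}=a_i^{(m)}(X^{(i)})g_i$ and abbreviating $\Pi_i=\Pi(\cdot\mid H_i)$, the definition of $a_2^{(m)}$ reads, after multiplying by $g_2$ and using linearity, $f_2^{(m)}=\Pi_2\left(S_{\theta_0}-f_1^{(m)}\right)$. For the first component I would insert $f_2^{(m-1)}=\Pi_2\left(S_{\theta_0}-f_1^{(m-1)}\right)$ into $f_1^{(m)}=\Pi_1\left(S_{\theta_0}-f_2^{(m-1)}\right)$; expanding yields $f_1^{(m)}=\Pi_1(S_{\theta_0})-\Pi_1\Pi_2(S_{\theta_0})+\Pi_1\Pi_2\left(f_1^{(m-1)}\right)$, and the three terms here match, one by one and with the correct signs, the three conditional–expectation terms defining $a_1^{(m)}$. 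Thus the procedure is the Gauss–Seidel backfitting scheme $f_1^{(m)}=\Pi_1(S_{\theta_0}-f_2^{(m-1)})$, $f_2^{(m)}=\Pi_2(S_{\theta_0}-f_1^{(m)})$, initialised at $a_1^{(0)}=0$. This step is pure algebra and uses nothing beyond the explicit form of $\Pi_i$.

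With the scheme identified, the convergence $\overline{S}_{\theta_0}^{(m)}=f_1^{(m)}+f_2^{(m)}\to\Pi\left(S_{\theta_0}\mid\overline{H_1+H_2}\right)$ in $\bigoplus_{k=1}^{d}L^2(P_0)$ is the content of Theorem~A.4.2(A) of BKRW, applied to the pair $H_1,H_2$ and the vector $S_{\theta_0}$. It then remains only to recall $\overline{H_1+H_2}=\mathcal{T}^{\perp}$, so that the limit equals $\overline{S}_{\theta_0}=\Pi(S_{\theta_0}\mid\mathcal{T}^{\perp})$, which is the asserted identity.

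The hard part will not be the convergence itself, which is quoted, but the verification of the hypotheses of the cited theorem in our setting, and this is the one place where the assumptions of Theorem~\ref{Main_Result} enter. One must check that $H_1,H_2$ are closed, which follows from the square–integrability conditions, and, above all, that the contraction condition in Assumption~$T$, namely $\|E^{-1}(g_i\,b_j'\mid X^{(1)},X^{(2)})\|_{\infty}<1$, is the quantitative condition guaranteeing a strictly positive minimal angle between $H_1$ and $H_2$. This positive angle is what makes $H_1+H_2$ closed, so that $H_1+H_2=\overline{H_1+H_2}$, and what forces the successive approximations to contract geometrically, which is exactly the situation covered by part~(A) of the theorem. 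A secondary technical point I would state explicitly is that $E(s\,g_i'\mid X^{(i)})V^{-}(g_i\mid X^{(i)})g_i$ is independent of the choice of generalised inverse, i.e. that the relevant conditional cross–moments lie almost surely in the range of $V(g_i\mid X^{(i)})$; this is harmless under the standing assumptions but is needed for the projection formula to be well defined.
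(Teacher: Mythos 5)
Your proposal follows exactly the paper's route: the paper's entire proof is the one-line observation that Theorem~A.4.2(A) of BKRW applies to $H_1=\mathcal{T}_1^{\perp}$, $H_2=\mathcal{T}_2^{\perp}$ and $S_{\theta_0}$, and your explicit identification of $\Pi(\cdot\,|\,H_i)$ with $s\mapsto E(s\,g_i'\,|\,X^{(i)})V^{-}(g_i\,|\,X^{(i)})g_i$ together with the algebraic check that $f_1^{(m)}=\Pi_1(S_{\theta_0}-f_2^{(m-1)})$, $f_2^{(m)}=\Pi_2(S_{\theta_0}-f_1^{(m)})$ reproduces the displayed $a_1^{(m)},a_2^{(m)}$ is precisely the (unwritten) verification the paper leaves to the reader. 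That part is correct and is a useful supplement.

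Your closing paragraph, however, misstates what the cited theorem requires. Part~(A) of BKRW's Theorem~A.4.2 asserts convergence of the \emph{sum} $f_1^{(m)}+f_2^{(m)}$ to $\Pi\left(S_{\theta_0}\,|\,\overline{H_1+H_2}\right)$ for arbitrary closed subspaces $H_1,H_2$; no positive minimal angle between $H_1$ and $H_2$, and no closedness of $H_1+H_2$, is needed for this. The angle/contraction considerations you invoke are relevant only to the convergence of the \emph{individual} components $a_i^{(m)}(X^{(i)})g_i$, which is the content of part~(C) and which the paper explicitly declines to claim in this lemma (see the discussion immediately following it, where conditions such as $\mathcal{T}_1^{\perp}\cap\mathcal{T}_2^{\perp}=\{0\}$ are introduced for that stronger conclusion). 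So the ``hard part'' you identify is not actually a hypothesis you need to verify here; insisting on it would make your proof appear to require conditions (a quantitative angle bound derived from Assumption~$T$) that are neither established in the paper nor necessary for the stated result. The only genuine hypotheses to check are that each $H_i$ is a closed subspace of $\bigoplus_{k=1}^d L^2(P_0)$ and that the projection formula is well defined, which you do address.
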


Let us point out that even if Lemma \ref{Iterative algorithm} guarantees the convergence of the iterations $\overline{S}_{\theta_0 }^{\left( m\right) }$, it is not necessarily true that the sequences
$a_{1}^{\left( m\right) }\left( X^{\left( 1\right) }\right) g_1$ and $%
a_{2}^{\left( m\right) }\left( X^{\left( 2\right) }\right) g_2$ converge.
Sufficient mild conditions are provided in Theorem~A.4.2 (C) of BKRW, that
are
\begin{equation}
\overline{S}_{\theta _{0}}=\Pi \left( S_{\theta _{0}}|\mathcal{T}^{\perp
}\right) =a_{1}^{\ast }\left( X^{\left( 1\right) }\right) \cdot
g_{1}+a_{2}^{\ast }\left( X^{\left( 2\right) }\right) \cdot g_{2}\in
\mathcal{T}_{1}^{\perp }+\mathcal{T}_{2}^{\perp }  \label{a_4_2c}
\end{equation}%
with $a_{1}^{\ast }\left( X^{\left( 1\right) }\right) \cdot g_{1}\in
\mathcal{T}_{1}^{\perp }\cap \left( \mathcal{T}_{1}^{\perp }\cap \mathcal{T}%
_{2}^{\perp }\right) ^{\perp }\subset \mathcal{T}_{1}^{\perp }.$ Moreover,
by Proposition A.4.1 of BKRW, condition $\left( \ref{a_4_2c}\right) $ is
equivalent with the existence of a solution $a_{1}^{\ast }g_1$ and $a_{2}^{\ast
}g_2$ for the system
\begin{equation}
\left\{
\begin{array}{c}
a_{1}^{\ast }\left( X^{\left( 1\right) }\right) \ g_{1}=\rho _{1}-E\left[
a_{2}^{\ast }\left( X^{\left( 2\right) }\right) \ g_{2}\ g_{1}^{\prime }\ |\
X^{\left( 1\right) }\right] \ V^{-}\left( g_{1}|X^{\left( 1\right) }\right)
\ g_{1} \\
\\
a_{2}^{\ast }\left( X^{\left( 2\right) }\right) \ g_{2}=\rho _{2}-E\left[
a_{1}^{\ast }\left( X^{\left( 1\right) }\right) \ g_{1}\ g_{2}^{\prime }\ |\
X^{\left( 2\right) }\right] \ V^{-}\left( g_{2}|X^{\left( 2\right) }\right)
\ g_{2},%
\end{array}%
\right.  \label{RMD 2}
\end{equation}%
where
\begin{eqnarray*}
\rho _{i} =\rho _{i} (Z,\theta_0)&:=&\Pi \left( S_{\theta_0 }|\mathcal{T}%
_{i}^{\perp }\right) =E\left( S_{\theta_0 }g_{i}^{\prime }\ |\ X^{\left(
i\right) }\right) \ V^{-}\left( g_{i}\ |\ X^{\left( i\right) }\right) \ g_{i}
\\
&=&-E\left( \partial _{\theta }g_{i}^{\prime }\ |\ X^{\left( i\right)
}\right) \ V^{-}\left( g_{i}\ |\ X^{\left( i\right) }\right) \ g_{i}.
\end{eqnarray*}
(A careful inspection of the proof of Proposition A.4.1 of BKRW shows that
condition $H_{1}+H_{2}=\mathcal{T}_{1}^{\perp }+\mathcal{T}_{2}^{\perp }$ is
a closed subspace is not necessary for deriving that result, since what is
really used in their proof is the relation $H_{1}^{\perp }\cap H_{2}^{\perp
}=\left( H_{1}+H_{2}\right) ^{\perp }$). If in addition the system $\left( %
\ref{RMD 2}\right) $ has a unique solution, the backfitting algorithm above
is nothing but a convergent iterative procedure for finding it.

In applications, a convenient way to check uniqueness is to prove a
contraction property. This is the case for instance if $\mathcal{T}%
_{1}^{\perp }\cap \mathcal{T}_{2}^{\perp }=\left\{ 0\right\} $, \ which in
our framework holds if
\begin{equation*}
E\left( g_{1}\ g_{2}^{\prime }\ |\ X^{\left( 1\right) },X^{\left( 2\right)
}\right) =0
\end{equation*}%
(in the sequential case, this can be achieved by writing the initial system
in an equivalent form satisfying the orthogonal condition above; see
subsection~\ref{seqmom}).

In the general case where $\mathcal{T}_{1}^{\perp }\cap \mathcal{T}%
_{2}^{\perp }\neq \left\{ 0\right\} $ the system $\left( \ref{RMD 2}\right) $
rewritten as in Proposition A.4.1 of BKRW under the form
\begin{equation*}
\left\{
\begin{array}{c}
h_{1}^{\ast }=\Pi \left( S_{\theta_0 }-h_{2}^{\ast }|\mathcal{T}_{1}^{\perp
}\right) \\
\\
h_{2}^{\ast }=\Pi \left( S_{\theta_0 }-h_{1}^{\ast }|\mathcal{T}_{2}^{\perp
}\right) ,%
\end{array}%
\right.
\end{equation*}%
does not necessarily have the contraction property. In our problem $%
h_{1}^{\ast }=a_{1}^{\ast } g_{1}$ and $h_{2}^{\ast }=a_{2}^{\ast } g_{2}$
with $g_{1}$ and $g_{2}$ given. Hence it suffices to check a contraction
property for $a_{1}^{\ast }g_1$ and $a_{2}^{\ast }g_2$ or some given
transformations of them.
We will see in subsection \ref{sec_appli_missing} that in the regression-like models with missing data framework, see  Robins, Rotnitzky, Zhao (1994), the
equations $\left( \ref{RMD 2}\right) $ lead to a contraction property for
some given transformations of $a_{1}^{\ast }g_1$ and $a_{2}^{\ast }g_2$.

The ``backfitting'' algorithm we proposed above involves $\theta_0$ that is
unknown. In practice one can use the following steps: (i) build $\widetilde
\theta_n$ a $\sqrt{n}-$consistent estimator of $\theta _{0}$, for instance
the \emph{smooth minimum distance estimator (SMD)} like in Lavergne and
Patilea (2008); (ii) estimate nonparametrically $a_{1}^{\left(
m^\star\right) }$ and $a_{2}^{\left( m^\star\right) }$ the solution of the
``backfitting'' algorithm obtained after, say, $m^\star$ iterations using $%
\widetilde \theta_n$ instead of $\theta_0$; and (iii) construct an efficient
(classical GMM or SMD) estimator $\widehat{\theta }^{\left( m^\star\right) }$
based on the approximate efficient score equations $E\left( \widehat{\overline{S}}_{\theta }\right) =0, $
where
\begin{equation*}
\widehat{\overline{S}}_{\theta }=\widehat{a}_{1}^{\left( m^\star\right) }(
X^{\left( 1\right) },\widetilde\theta_n) \ g_{1}(Z,\theta) + \widehat{a}%
_{2}^{\left( m^\star\right) }( X^{\left( 2\right) },\widetilde\theta_n) \
g_{2}(Z,\theta) ,
\end{equation*}
and $\widehat{a}_{i}^{\left( m^\star\right) }( X^{\left( i\right)
},\widetilde\theta_n)$ are nonparametric estimates of ${a}_{i}^{\left(
m^\star\right) }( X^{\left( i\right) },\theta_0),$ $i=1,2.$

\section{Applications}

\label{sec_applis_b}

In this section we illustrate the utility of our theoretical results for two
general classes of models: sequential (nested) conditional models and regression-like
models with missing data. The general results in sections \ref{main_result_sec} and \ref{Section : Efficient estimation} above allow us:
(a) to complete a semiparametric efficiency bound result of Chamberlain
(1992b); and (b) to generalize the mean regression with missing data setting
of Robins, Rotnitzky and Zhao (1994) and Tan (2011) to more general moment conditions,
which includes for example quantile regressions.

\subsection{Sequential conditional moments\label{seqmom}}

Important cases where equations $\left( \ref{RMD 2}\right) $ have an
explicit solution are the cases where  $\sigma \left( X^{\left(
1\right) }\right) \subset \sigma \left( X^{\left( 2\right) }\right) $ holds
true. In the case $J=2$, the model $E( g_{j} (Z,\theta) \mid X^{( j) }) =0$, $j=1,2,$ defined in (\ref{model_I})
can be equivalently written under the form%
\begin{equation}
\left\{
\begin{array}{l}
E\left( \widetilde{g}_{1}(Z,\theta)  \mid X^{\left( 1\right) }\right) =0 \\
E\left( g_{2} (Z,\theta)  \mid X^{\left( 2\right) }\right) =0,%
\end{array}%
\right.   \label{rezt}
\end{equation}%
where
\begin{equation*}
\widetilde{g}_{1}(Z,\theta) =g_{1} (Z,\theta)  -E\left( g_{1} (Z,\theta_0) \ g_{2}^{\prime } (Z,\theta_0) \mid X^{\left(
2\right) }\right) \ V^{-1}\left( g_{2}(Z,\theta_0)  \mid X^{\left( 2\right) }\right) \
g_{2} (Z,\theta) .
\end{equation*}%
Here we suppose that $V\left( g_{1}(Z,\theta_0) \mid X^{( 1 ) }\right)$ and $V\left( g_{2}(Z,\theta_0)  \mid X^{( 2) }\right)$ are invertible and this guarantees that $\theta_0$ is also identified by the equations (\ref{rezt}). Recall that $g_i$ is a short notation for $g_i(Z,\theta_0)$ and similarly let $\widetilde g_i$ replace $\widetilde g_i(Z,\theta_0)$.

Notice that  $\widetilde{g}_{1}$ is the residual of the projection of $g_{1}$ on $%
g_{2}$ with respect to $\sigma \left( X^{\left( 2\right) }\right) $ and $%
E\left( \widetilde{g}_{1}\ g_{2}^{\prime }\mid X^{\left( 2\right) }\right)
=0.$ Let $\widetilde{\mathcal{T}}_{1}$ be the tangent space of the model
defined by the first equation in (\ref{rezt}). By the definition of $%
\widetilde{g}_{1}$, it is quite clear that condition $\widetilde{\mathcal{T}}%
_{1}^{\perp }\cap \mathcal{T}_{2}^{\perp }=\left\{ 0\right\} $ holds true.
Next, multiplying the $i$th equation in (\ref{RMD 2}) by $g_i$, taking conditional expectation given $X^{(i)}$ and finally multiplying by $V^{-1}(g_i\mid X^{(i)})$, $i=1,2$, the  system (\ref{RMD 2}) corresponding to model $\left( \ref{rezt}\right) $
becomes
\begin{equation}\label{zert}
\left\{
\begin{array}{l}
\widetilde{a}_{1}^{\ast }\left( X^{\left( 1\right) }\right) =-E\left(
\partial _{\theta }\widetilde{g}_{1}^{\;\prime }\ |\ X^{\left( 1\right)
}\right) \ V^{-1}\left( \widetilde{g}_{1}|\ X^{\left( 1\right) }\right)  \\
\qquad \qquad \qquad \qquad -E\left( \widetilde{a}_{2}^{\ast }\left(
X^{\left( 2\right) }\right) \cdot g_{2}\ \widetilde{g}_{1}^{\;\prime }\ |\
X^{\left( 1\right) }\right) \ V^{-1}\left( \widetilde{g}_{1}|\ X^{\left(
1\right) }\right)  \\
\widetilde{a}_{2}^{\ast }\left( X^{\left( 2\right) }\right) =-E\left(
\partial _{\theta }g_{2}^{\prime }\ |\ X^{\left( 2\right) }\right) \
V^{-1}\left( g_{2}\ |\ X^{\left( 2\right) }\right)  \\
\qquad \qquad \qquad \qquad -E\left( \widetilde{a}_{1}^{\ast }\left(
X^{\left( 1\right) }\right) \cdot \widetilde{g}_{1}\ g_{2}^{\prime }\ |\
X^{\left( 2\right) }\right) \ V^{-1}\left( g_{2}\ |\ X^{\left( 2\right)
}\right) .%
\end{array}%
\right.
\end{equation}%
Since by definition
$
E( \widetilde{a}_{2}^{\ast }( X^{( 2) })
g_{2}\ \widetilde{g}_{1}^{\;\prime }\ |\ X^{( 1) }) =E[
\widetilde{a}_{2}^{\ast }( X^{( 2) })  E(
g_{2}\ \widetilde{g}_{1}^{\;\prime }\mid X^{( 2) }) \ |\
X^{( 1) }] =0
$
and
$
E\left( \widetilde{a}_{1}^{\ast }\left( X^{\left( 1\right) }\right)
\widetilde{g}_{1}\ g_{2}^{\prime }\ |\ X^{\left( 2\right) }\right) =
\widetilde{a}_{1}^{\ast }\left( X^{\left( 1\right) }\right)  E\left(
\widetilde{g}_{1}\ g_{2}^{\prime }\ |\ X^{\left( 2\right) }\right) =0
$
we obtain
\begin{equation}
\left\{
\begin{array}{l}
\widetilde{a}_{1}^{\ast }( X^{( 1) }) = - E(
\partial _{\theta }\widetilde{g}_{1}^{\;\prime }\ |\ X^{( 1)
}) \ V^{-1}( \widetilde{g}_{1}\ |\ X^{( 1) })
\\
\widetilde{a}_{2}^{\ast }( X^{( 2) }) = - E (
\partial _{\theta }g_{2}^{\prime }\ |\ X^{( 2) })
V^{-1}( g_{2}\ |\ X^{( 2) })
 .%
\end{array}%
\right.
\label{RMD 3}
\end{equation}%
($
E(\partial_\theta \widetilde g_i^{\; \prime}) $ denotes the transposed of the matrix $%
E(\partial_{\theta^\prime} \widetilde g_i) .$) The efficient score $\overline{S}_{\theta _{0}}$ can then be written as%
\begin{eqnarray*}
\overline{S}_{\theta _{0}} &=&\widetilde{a}_{1}^{\ast }\left( X\right) \cdot
\widetilde{g}_{1}+\widetilde{a}_{2}^{\ast }\left( X\right) \cdot g_{2} \\
&=&-E\left( \partial _{\theta }\widetilde{g}_{1}^{\;\prime }\ |\ X^{\left(
1\right) }\right) \ V^{-1}\left( \widetilde{g}_{1}\ |\ X^{\left( 1\right)
}\right) \ \widetilde{g}_{1}-E\left( \partial _{\theta }g_{2}^{\prime }\ |\
X^{\left( 2\right) }\right) \ V^{-1}\left( g_{2}\ |\ X^{\left( 2\right)
}\right) \ g_{2}.
\end{eqnarray*}

In the particular case where $X^{\left( 1\right) }=X^{\left( 2\right) }=X$,
\begin{eqnarray*}
\overline{S}_{\theta _{0}} &=&\widetilde{a}_{1}^{\ast }\left( X\right) \cdot
\widetilde{g}_{1}+\widetilde{a}_{2}^{\ast }\left( X\right) \cdot g_{2} \\
&=&-E\left( \partial _{\theta }\widetilde{g}_{1}^{\;\prime }\ |\ X\right) \
V^{-1}\left( \widetilde{g}_{1}\ |\ X^{\left( 1\right) }\right) \ \widetilde{g%
}_{1}-E\left( \partial _{\theta }g_{2}^{\prime }\ |\ X\right) \ V^{-1}\left(
g_{2}\ |\ X\right) \ g_{2} \\
&=&
\left(
\begin{array}{c}
-E\left( \partial _{\theta }\widetilde{g}_{1}^{\;\prime }\ |\ X\right) \\
-E\left( \partial _{\theta }g_{2}^{\prime }\ |\ X\right)
\end{array}%
\right)^\prime \ V^{-1}\left( \left(
\begin{array}{c}
\widetilde{g}_{1} \\
g_{2}
\end{array}%
\right) \mid X\right) \ \left(
\begin{array}{c}
\widetilde{g}_{1} \\
g_{2}%
\end{array}%
\right)\\
&=&
-E\left( \partial _{\theta }g^{\prime }\ C^{\prime }\left( X\right) \mid
X\right)%
 V^{-1}\left( C\left( X\right)  g \mid X\right) \ C\left( X\right)
 g \\
&=&-E\left( \partial _{\theta }g^{\prime }\ |\ X\right) \ V^{-1}\left( g \mid
X\right)  g,
\end{eqnarray*}
where $g^{\prime }=\left( g_{1}^{\prime }\ g_{2}^{\prime }\right) $ and
\begin{equation*}
C\left( X\right) =\left(
\begin{array}{ccc}
I &  & -E\left( g_{1}\ g_{2}^{\prime }\mid X\right)  V^{-1}\left( g_{2}\ |\
X\right) \\
0 &  & I%
\end{array}%
\right)
\end{equation*}%
is a nonsingular random matrix. This expression of the efficient score directly yields the efficiency bound derived in
Chamberlain (1987).

Another important particular case of formulae $\left( \ref{RMD 3}\right) $
is provided by models defined by sequential conditional moments; see  Chamberlain
(1992b), Ai and Chen (2009). Taking $X^{\left( 1\right) }=X_{1}$ and $%
X^{\left( 2\right) }=\left( X_{1}^{\prime },X_{2}^{\prime }\right) ^{\prime
} $, one obtains
\begin{eqnarray*}
\overline{S}_{\theta _{0}} &=&\widetilde{a}_{1}^{\ast }\left( X\right) \cdot
\widetilde{g}_{1}+\widetilde{a}_{2}^{\ast }\left( X\right) \cdot g_{2} \\
&& \\
&=&-E\left( \partial _{\theta }\widetilde{g}_{1}^{\;\prime }\ |\ X_{1}\right)
\ V^{-1}\left( \widetilde{g}_{1}\ |\ X_{1}\right) \ \widetilde{g}%
_{1}-E\left( \partial _{\theta }g_{2}^{\prime }\ |\ X_{1},X_{2}\right) \
V^{-1}\left( g_{2}\ |\ X_{1},X_{2}\right) \ g_{2}.
\end{eqnarray*}%
Let us point that Chamberlain (1992b) only proves this result for discrete
distributions and Ai and Chen (2009) obtain the result in a more general
framework (allowing for unknown infinite dimensional parameters in the equations
defining the model) but under slightly more restrictive assumptions than in our setting.\footnote{Ai and Chen (2009) implicitly require that the class $\mathcal{G}$ appearing in their Assumption A in the Mathematical Appendix is the same for each value of their model parameter $\alpha$. This variation independent parametrization assumption represents an additional restriction that is unnecessary in our approach. See also van der Laan and Robins (2003), page 18, for some lucid comments on the existence of a variation independent parametrization. }

\subsection{Regression-like models  with missing data}
\label{sec_appli_missing}
Consider now a regression-like model defined by the equations
\begin{equation}
E\left[ \rho \left( Y,X^{\ast },\alpha\right) \ |\ X^{\ast }\right] =0,
\label{RMD 4}
\end{equation}%
where $\rho(\cdot,\cdot,\cdot)$ is some measurable vector-valued function, $\alpha$ is a (finite-dimension) vector of parameters, and
the vector $\left( Y^\prime,X^{\ast \;\prime}\right) =\left( Y^\prime,X^\prime ,V^\prime \right) $ is not
always completely observed. We also assume that a non-missing indicator $\delta$ and
some other variable $V^{0}$
are always observed.  In the following examples we consider two random missingness mechanisms considered respectively by Tan~(2011) and Robins, Rotnitzky and Zhao (1994).
\begin{example}\label{rrz_exp}
\begin{enumerate}
\item[(i)] The vector $Y$ is observed iff $\delta =1;$

\item[(ii)] The vector $W=\left(
\begin{array}{c}
X^{\ast } \\
V^{0}%
\end{array}%
\right) $ is always observed and we have
\begin{equation}
P\left( \delta =1\ |\ Y,W\right) =P\left( \delta =1\ |\ W\right) =\pi \left(
W\right) .  \label{RMD 5}
\end{equation}
\end{enumerate}
\end{example}

\begin{example} \label{tan_exp}
\begin{enumerate}
\item[(i)] Let $X^{\ast }=\left(
\begin{array}{c}
X \\
V%
\end{array}%
\right) $ where $X$ is observed iff $\delta =1;$

\item[(ii)] The vector $W=\left(
\begin{array}{c}
Y \\
V \\
V^{0}%
\end{array}%
\right) $ is always observed and we have
\begin{equation}
P\left( \delta =1\ |\ X,W\right) =P\left( \delta =1\ |\ W\right) =\pi \left(
W\right) .  \label{RMD 6}
\end{equation}
\end{enumerate}
\end{example}

Let $\alpha_0$ be the true value of the parameter identified by the model (\ref{RMD 4}). The equation $\left( \ref{RMD 4}\right) $ and each of $\left( \ref{RMD 5}\right) $
or $\left( \ref{RMD 6}\right) $ imply
\begin{equation}
E\left[ \frac{\delta }{\pi \left( W\right) }\ \rho \left( Y,X^{\ast },\alpha
_{0}\right) \ |\ X^{\ast }\right] =0.  \label{RMD 7}
\end{equation}%
We can consider this equation at the observational level even for
missing $X^{\ast }$, since for missing values of
$X^{\ast }$ we have $\delta = 0$ which renders the equation   noninformative.
Note also that $\left( \ref{RMD 5}\right) $ and $\left( \ref{RMD 6}\right) $
can be written under the unified form
\begin{equation*}
P\left( \delta =1\ |\ Y,X^{\ast },W\right) =\pi \left( W\right) .
\end{equation*}
Therefore, at the observational level, with any of the two examples we obtain  a model like
\begin{equation}
\left\{
\begin{array}{l}
E\left[ \dfrac{\delta }{\pi \left( W\right) }\ \rho \left( Y,X^{\ast
},\alpha _{0}\right) \ |\ X^{\ast }\right] =0 \\
\\
E\left[ \dfrac{\delta }{\pi \left( W\right) }-1\ |\ W\right] =0.%
\end{array}%
\right.  \label{RMD 8}
\end{equation}%
Moreover, like in Graham~(2011, footnote 8, page 442), it can be shown that at the
observational level,  a model given by equation $\left( \ref{RMD 4}\right) $
and any of the missing data mechanism described in Example \ref{rrz_exp} or Example \ref{tan_exp} is equivalent
to the model defined by $\left( \ref{RMD 8}\right) $.

With our notation,  $Z$ is the vector built as the union of all the variables contained in $Y$, $X^{\ast }$, $W$ and $\delta$,
$\theta =\alpha $, $g_{1}(Z,\theta)=\{\delta/\pi
\left( W\right)\}  \rho \left( Y,X^{\ast },\alpha \right) $, $g_{2}(Z,\theta)=\{\delta/\pi
\left( W\right)\}-1$, $X^{\left( 1\right) }=X^{\ast }$ and $%
X^{\left( 2\right) }=W$. Let $\rho$ be a short for $\rho \left( Y,X^{\ast },\alpha
_{0}\right)$.
Then the functions $a_{1}^{\ast }$ and $a_{2}^{\ast }$
defining the efficient score are given by the following equations obtained (see also equations (\ref{zert}))
from equations $\left( \ref{RMD 2}\right)$~:
\begin{eqnarray*}
a_{1}^{\ast }\left( X^{\ast }\right) &=&a_{1}^{\ast }\left( X^{\left(
1\right) }\right) \\
&=&-E\left( \partial _{\alpha }\rho ^{\prime }\ |\ X^{\ast }\right) \
E^{-1}\left( \dfrac{1}{\pi \left( W\right) }\ \rho \ \rho ^{\prime }\ |\
X^{\ast }\right) \\
&&\qquad +E\left\{ E\left[ a_{1}^{\ast }\left( X^{\ast }\right) \ \rho \ |\ W%
\right] \ \dfrac{1-\pi \left( W\right) }{\pi \left( W\right) }\ \rho
^{\prime }\ |\ X^{\ast }\right\} \ E^{-1}\left( \dfrac{1}{\pi \left(
W\right) }\ \rho \ \rho ^{\prime }\ |\ X^{\ast }\right) ;
\end{eqnarray*}%
\begin{eqnarray*}
a_{2}^{\ast }(W)
&=& a_{2}^{\ast }\left( X^{\left( 2\right) }\right)\\ &=&E\left[ a_{1}^{\ast
}\left( X^{\ast }\right) \rho\ \frac{\delta }{\pi \left( W\right) }
\left( \dfrac{\delta }{\pi \left( W\right) }-1\right) \ |\ W\right] \ E^{-1}%
\left[ \left( \dfrac{\delta }{\pi \left( W\right) }-1\right) ^{2}\ |\ W%
\right] \\
&=&-E\left[ a_{1}^{\ast }\left( X^{\ast }\right) ~\rho \mid W \right] .
\end{eqnarray*}%
In the particular case where $\rho =\rho \left( Y,X^{\ast },\alpha
_{0}\right) =Y-g\left( X^{\ast },\alpha _{0}\right) $ and the selection
probability $\pi \left( W\right) $ is known, these are exactly the
equations obtained in Robins, Rotnitzky and Zhao (1994). They showed that for the
regression case, the equation for $a_{1}^{\ast }$ corresponds to a
contraction (see the proof of their Proposition 4.2). In subsection \ref{rrz_contrac} in the Appendix we show that such a contraction property holds for a more general $\rho $. Hence we could include in our framework further interesting examples, \emph{e.g.} quantile regressions. The contraction property allows to solve the equations in $a_{1}^{\ast }( X^{\ast })$ and $a_{2}^{\ast }(W)$ by successive approximations.

Let us consider the extended framework where  the selection probability is known up to an unknown  finite dimension parameter $\gamma_{0}$, that is
\begin{equation*}
P\left( \delta =1\ |\ W\right) =\pi \left( W,\gamma _{0}\right),
\end{equation*}
(see also Robins, Rotnitzky and Zhao (1994), equation (18)). In subsection \ref {rrz_selec_prob} in the Appendix we show that the efficiency score for $\alpha_{0}$ has the same expression regardless the selection probability function $\pi$ is given or depends on the unknown parameter $\gamma_0$. Thus, we extend a result of Robins, Rotnitzky and Zhao (1994), see also  Tan~(2011), obtained in the particular case of mean regressions.

Let us close this section with a remark. Robins, Rotnitzky and Zhao (1994) considered the case where missingness arises only in covariables $X^\star$ (that is also the case considered in our Example \ref{tan_exp}) and derived the efficient score equations.  Tan~(2011) obtained formally the same equations with missing regressors \emph{and} missing responses (the case corresponding to our Example \ref{rrz_exp}) using the corresponding definition of $W$.
However, there is an important difference between the Examples \ref{rrz_exp} and \ref{tan_exp}. In the possibly missing responses case we have $\sigma \left( X^{\ast }\right) \subset \sigma \left( W\right) $, so that Example \ref{rrz_exp} falls in the sequential conditional moments framework where the solutions for $a_{1}^{\ast }$ and $a_{2}^{\ast }$ are explicit. Such explicit solutions are \emph{no longer} available in the framework considered by Robins, Rotnitzky and Zhao (1994) and in our Example \ref{tan_exp}.

\qquad

\qquad

\section{Appendix}

\subsection{Additional proofs}

\begin{proof}[Proof of Lemma \protect\ref{key_lemma}]
By definition (van der Vaart (1998), pp 363), there exists a continuous
linear map $\dot{\psi}:L^{2}\left( P_{0}\right) \rightarrow \mathbb{R}^{d}$
such that for any $g\in \mathcal{\dot{P}}_{P_{0}}\subset L^{2}\left(
P_{0}\right) $ and a submodel $\left( -\varepsilon ,\varepsilon \right) \ni
t\mapsto P_{t}$ with score function $g$,
\begin{equation*}
\frac{\psi \left( P_{t}\right) -\psi \left( P_{0}\right) }{t}\underset{%
t\rightarrow 0}{\longrightarrow }\dot{\psi}\left( g\right) .
\end{equation*}%
By the Riesz representation theorem, there exists a unique $d-$dimension
vector-valued function having the components in $L^{2}\left( P_{0}\right) $
such that $\dot{\psi}\left( h\right) =E_{P_{0}}\left( \overline{\psi }%
h\right) $ for every $h\in L^{2}\left( P_{0}\right) $. In particular,%
\begin{equation*}
\dot{\psi}\left( g\right) =E_{P_{0}}\left( \overline{\psi }g\right) =\int
\overline{\psi }gdP_{0},\qquad \forall g\in \mathcal{\dot{P}}_{P_{0}}\subset
L^{2}\left( P_{0}\right) \text{.}
\end{equation*}%
Let $\widetilde{\psi }$ and $\widetilde{\psi }_{k}$ denote the elements of $%
\left[ L^{2}\left( P_{0}\right) \right] ^{d}$ obtained by componentwise
projections of $\overline{\psi }$ on the tangent spaces $\mathcal{T}\subset
L^{2}\left( P_{0}\right) $ and $\mathcal{T}_{k}\subset L^{2}\left(
P_{0}\right) $, respectively. The Fisher information matrices on $\theta
_{0}=\psi \left( P_{0}\right) $ in the models $\mathcal{P}$, $\mathcal{P}%
_{k} $ at $P_{0}$ are then defined by%
\begin{equation*}
I_{\theta _{0}}^{-1}\left( \mathcal{P}\right) = V_{P_{0}}\left( \widetilde{%
\psi }\right) =E_{P_{0}}\left( \widetilde{\psi }\widetilde{\psi }^{\prime
}\right) , \qquad I_{\theta _{0}}^{-1}\left( \mathcal{P}_{k}\right)
=V_{P_{0}}\left( \widetilde{\psi }_{k}\right) ,\quad k\in \mathbb{N}^{\ast }.
\end{equation*}
From%
\begin{equation*}
\mathcal{P}_{1}\supset \mathcal{P}_{2}\supset \ldots \supset \mathcal{P}%
_{k}\supset \mathcal{P}_{k+1}\supset \ldots \supset
\bigcap\limits_{k=1}^{\infty }\mathcal{P}_{k}\supset \mathcal{P}
\end{equation*}%
we deduce that%
\begin{equation*}
\mathcal{\dot{P}}_{1}\supset \mathcal{\dot{P}}_{2}\supset \ldots \supset
\mathcal{\dot{P}}_{k}\supset \mathcal{\dot{P}}_{k+1}\supset \ldots \supset
\bigcap\limits_{k=1}^{\infty }\mathcal{\dot{P}}_{k}\supset \mathcal{\dot{P}},
\end{equation*}%
and
\begin{equation*}
\mathcal{T}_{1}\supset \mathcal{T}_{2}\supset \ldots \supset \mathcal{T}%
_{k}\supset \mathcal{T}_{k+1}\supset \ldots \supset
\bigcap\limits_{k=1}^{\infty }\mathcal{T}_{k}=\mathcal{T},
\end{equation*}%
where the last equality is due to $( \ref{spaning cond 2}) $. By Lemma 4.5
of Hansen and Sargent (1991),
\begin{equation*}
\lim_{k\rightarrow \infty }I_{\theta _{0}}^{-1}\left( \mathcal{P}_{k}\right)
=\lim_{k\rightarrow \infty }V_{P_{0}}\left( \prod\left( \overline{\psi }|%
\mathcal{T}_{k}\right) \right) =V_{P_{0}}\left( \prod\left( \overline{\psi }|%
\mathcal{T}\right) \right) =V_{P_{0}}\left( \widetilde{\psi }\right)
=I_{\theta _{0}}^{-1}\left( \mathcal{P}\right) .
\end{equation*}
\end{proof}

\subsection{Assumptions}

For a subset $A\subset \mathrm{supp}Z$, we use the following notations~: $%
g_{i,A} =g_{i}(Z,\theta_0) \mathbf{I}_{\{Z\in A\}}, $ $i=1,2,$ and
\begin{equation}  \label{b_i_A}
b_{i} = g_{i,A}-E ( g_{i,A}\ g_{j,A}^{\prime }\mid X^{( 1) },X^{( 2) }) \
E^{-1}( g_{j,A}\ g_{j,A}^{\prime }\mid X^{( 1) }, X^{( 2) }) \ g_{j,A},
\end{equation}
$( i,j) \in \{ ( 1,2), ( 2,1) \}$ where $E^{-1}\left( g_{j,A}\
g_{j,A}^{\prime }\ |\ X^{\left( 1\right) },X^{\left( 2\right) }\right) $
stands for the inverse of the matrix $E\left( g_{j,A}\ g_{j,A}^{\prime }\ |\
X^{\left( 1\right) },X^{\left( 2\right) }\right) $ that is supposed to exist.

\begin{description}
\item \label{ass_T}

\item[Assumption $\mathbf{{T}}$] \quad \newline
There exist a subset $A\subset \mathrm{supp}Z$ such that for $i=1,2,$ $%
g_{i,A}$ is a bounded function and

\begin{enumerate}
\item $E\left( g_{i,A}\ g_{i,A}^{\prime }\ |\ X^{\left( 1\right) },X^{\left(
2\right) }\right) $ is invertible and $\left\Vert E^{-1}\left( g_{i,A}\
g_{i,A}^{\prime }\ |\ X^{\left( 1\right) },X^{\left( 2\right) }\right)
\right\Vert _{\infty }<\infty $;

\item $\left\Vert E^{-1}\left( b_{i}\ b_{i}^{\prime }\ |\ X^{\left( i\right)
}\right) \right\Vert _{\infty }<\infty $ with $b_i$ defined in (\ref{b_i_A}).
\end{enumerate}
\end{description}

\begin{remark}
Under Assumption~$T$ and for any $\alpha >0$, by the definition of $b_{i}$,
for $(i,j)\in \{(1,2),\ (2,1)\},$
\begin{equation*}
E\left( g_{i}(Z,\theta _{0})\ \alpha b_{j}^{\prime }\ |\ X^{\left( 1\right)
},X^{\left( 2\right) }\right) =E\left( g_{i,A}\ \alpha b_{j}^{\prime }\ |\
X^{\left( 1\right) },X^{\left( 2\right) }\right) =0,
\end{equation*}%
and, for $i=1,2,$%
\begin{equation*}
E\left( g_{i,A}\ \alpha b_{i}^{\prime }\ |\ X^{\left( i\right) }\right)
=\alpha E\left( b_{i}\ b_{i}^{\prime }\ |\ X^{\left( i\right) }\right) .
\end{equation*}%
Therefore, in the proof of Theorem \ref{Main_Result}, up to a suitable
scaling factor, we can choose $b_{1}$ and $b_{2}$ such that conditions (\ref%
{fct_b_cond}) are satisfied.
\end{remark}

\bigskip

\begin{description}
\item[Assumption $\mathbf{{SP}}$]

\quad

\vspace{-0.35cm}

\begin{enumerate}
\item \label{ass_SP} The models $\mathcal{P}$ defined by $\left( \ref%
{model_I}\right) $ and $\mathcal{P}_{k}$ defined by $\left( \ref{model_III}%
\right) $, with $k\in \mathbb{N}^{\ast }$, can be written in the
semiparametric form
\begin{equation*}
\mathcal{P}=\left\{ P_{\theta ,\eta }:\ \theta \in \Theta ,\ \eta \in
H\right\} ,\quad \mathcal{P}_{k}=\left\{ P_{\theta ,\eta }:\ \theta \in
\Theta ,\ \eta \in H_{k}\right\} ,\ k\in \mathbb{N}^{\ast },
\end{equation*}%
and satisfy the assumptions of Lemma 25.25 (page 369) of van der
Vaart~(1998).

\item The Fisher information matrices $I_{\theta _{0}}$ and $I_{\theta
_{0}}^{\left( k\right) }$ on $\theta _{0}$ in models $\mathcal{P}$ and $%
\mathcal{P}_{k}$ respectively, for any $k\in \mathbb{N}^{\ast }$, are well
defined and nonsingular.
\end{enumerate}
\end{description}

\bigskip

To guarantee Assumption~SP.2 it suffices to suppose that for any $1\leq
j\leq J$: (i) $\|V (g_j( Z,\theta _{0})\mid X^{(j)})\|_\infty <\infty$; (ii)
the maps $\theta\mapsto E (g_j (Z,\theta _{0})\mid X^{(j)}=x^{(j)})$ are
differentiable for $P_{X^{(j)}}-$almost all $x^{(j)}$; and (iii) the
information matrix
\begin{equation*}
E\left\{E\left[ \left( \partial_{\theta ^{\prime }}g_j\left( Z,\theta
_{0}\right) \right) ^{\prime }\mid X^{(j)}\right] V^{-}\left[ g_j\left(
Z,\theta _{0}\right) \mid X^{(j)} \right] E\left[ \partial_{\theta ^{\prime
}}g_j\left( Z,\theta _{0}\mid X^{(j)} \right) \right] \right\}
\end{equation*}
is non singular.

A consequence of Assumption~SP (see Lemma 25.25 of van der Vaart~(1998)) is
that the parameter defined by $\psi \left( P_{\theta ,\eta }\right) =\theta $
is differentiable at $P_{0}=P_{\theta _{0},\eta _{0}}$ with respect to the
tangent space $\mathcal{T}=\mathcal{T}\left( \mathcal{P},P_{0}\right) $. It
also ensures that the tangent space $\mathcal{T}$ can be written as the sum
of the finite dimensional subspace spanned by the components of the
parametric score $S_{\theta _{0}}$ and the tangent space $\mathcal{T}%
^{\prime }$ corresponding to the nonparametric part $\mathcal{P}^{\prime
}=\left\{ P_{\theta _{0},\eta }:\ \eta \in H\right\} $ of the model $%
\mathcal{P}$~:%
\begin{equation*}
\mathcal{T}=\mathrm{lin}S_{\theta _{0}}+\mathcal{T}^{\prime }.
\end{equation*}%
Note that this assumption does not necessarily mean that the parameters $%
\theta $ and $\eta $ are completely separated. In fact $\theta$ and $\eta$
are connected since the functional parameter $\eta $ can have $\theta $
among its arguments. Assumption~$SP$ only means that when considering the
density of $P_{\theta ,\eta }$ with respect to a dominating measure $\mu $
we could write it under the form
\begin{equation*}
f\left( \cdot ,\theta ,\eta \left( v\left( \cdot ,\theta \right) \right)
\right) ,
\end{equation*}%
with $f$ and $v$ having a known form, where $f\left( \cdot ,\theta _{0},\eta
\left( v\left( \cdot ,\theta _{0}\right) \right) \right) $ and $f\left(
\cdot ,\theta ,\eta _{0}\left( v\left( \cdot ,\theta \right) \right) \right)
$ belong to the model $\mathcal{P}$ for every $\theta \in \Theta $ and $\eta
\in H$. For example, in the conditional mean setting with one conditioning
vector
\begin{equation*}
E\left[ Y-m\left( X,\theta \right) \ |\ X\right] =0,
\end{equation*}%
we can take $H$ as the set of zero conditional mean densities of $%
Z=(Y^\prime,X^\prime)^\prime$, i.e.
\begin{eqnarray*}
H &=&\left\{ p\left( y,x\right) \cdot \gamma \left( x\right) :\ p\geq 0,\
\gamma \geq 0,\ \int p\left( y,x\right) dy=1,\ \int yp\left( y,x\right)
dy=0,\ \forall x,\right. \\
&& \\
&&\qquad \left. \int \gamma \left( x\right) dy=1\right\}
\end{eqnarray*}%
and $v\left( y,x,\theta \right) =\left( y-m\left( x,\theta \right) ,x\right)
$, so that
\begin{equation*}
\eta \left( v\left( z,\theta \right) \right) =\eta \left( y-m\left( x,\theta
\right) ,x\right) =p\left( y-m\left( x,\theta \right) ,x\right) \cdot \gamma
\left( x\right)
\end{equation*}
and
\begin{equation*}
f\left( z,\theta ,\eta \left( v\left( z,\theta \right) \right) \right) =\eta
\left( v\left( z,\theta \right) \right) .
\end{equation*}
In the proof of Theorem~\ref{Main_Result} we identify the density $%
f(\cdot,\theta,\eta(v(\cdot,\theta)))$ with the infinite dimensional
nuisance parameter $\eta $ which is itself a density.

\subsection{Contraction property in regression-like models with missing data}\label{rrz_contrac}

With the same notation of  subsection \ref{sec_appli_missing}, we shall prove that the equation
\begin{eqnarray}
a_{1}^{\ast }\left( X^{\ast }\right) &=&E\left\{ E\left[ a_{1}^{\ast }\left(
X^{\ast }\right) \ \rho \left( Z,\theta _{0}\right) \ |\ W\right] \ \frac{%
1-\pi \left( W\right) }{\pi \left( W\right) }\ \rho ^{\prime }\left(
Z,\theta _{0}\right) \ |\ X^{\ast }\right\}  \notag \\
&&  \label{contraction 1} \\
&&\times \ E^{-1}\left[ \frac{1}{\pi \left( W\right) }\ \rho \left( Z,\theta
_{0}\right) \ \rho ^{\prime }\left( Z,\theta _{0}\right) \ |\ X^{\ast }%
\right]  \notag
\end{eqnarray}%
has a unique solution which can be obtained by successive approximation, under
the additional assumption%
\begin{equation}
\inf_{w}\pi \left( w\right) =1-\beta >0,  \label{contraction 2}
\end{equation}%
the infimum being taken over all possible values of $W.$ For simplicity, in the reminder of this subsection we drop the arguments of the functions. Let $\widetilde{\rho }=\pi^{-1/2} \rho $. Assuming that $%
E\left( \widetilde{\rho }\ \widetilde{\rho }^{\ \prime }\ |\ X^{\ast }\right) $
is invertible, equation $\left( \ref{contraction 1}\right) $ can be
equivalently written under the form%
\begin{eqnarray*}
a_{1}^{\ast }\ \widetilde{\rho } &=&E\left[ E\left( a_{1}^{\ast }\ \rho \ |\
W\right) \ \frac{1-\pi }{\pi }\ \rho ^{\prime }\ |\ X^{\ast }\right] \
E^{-1}\left( \frac{1}{\pi }\ \rho \ \rho ^{\prime }\ |\ X^{\ast }\right) \
\widetilde{\rho } \\
&=&E\left[ E\left( a_{1}^{\ast }\ \widetilde{\rho }\ |\ W\right) \ \left(
1-\pi \right) \ \widetilde{\rho }^{\ \prime }\ |\ X^{\ast }\right] \
E^{-1}\left( \widetilde{\rho }\ \widetilde{\rho }^{\ \prime }\ |\ X^{\ast
}\right) \ \widetilde{\rho } \\
&=:&\widetilde{T}\left(
a_{1}^{\ast }\ \widetilde{\rho }\right) .
\end{eqnarray*}%
We will show that the map $\widetilde{T}$ is a contraction.
Before that, let us state a Cauchy-Schwarz inequality for matrix valued random variables, a version of an inequality in Lavergne (2008):
let $\mathbb{E}$ denote the conditional expectation given an arbitrary $\sigma-$field, let $A\in\mathbb{R}^n \times \mathbb{R}^p$ and  $B \in \mathbb{R}^n \times \mathbb{R}^q$ be random matrices such that
$\mathbb{E}(tr(A^\prime A)),\mathbb{E}(tr(B^\prime B))<\infty$ and $\mathbb{E}(A^\prime A)$ is non-singular.
Then $\mathbb{E}(B^\prime B) - \mathbb{E}(B^\prime A) \mathbb{E}^{-1}(A^\prime A) \mathbb{E}(A^\prime B)$ is positive semi-definite, with equality iff  $B = A \mathbb{E}^{-1}(A^\prime A)\mathbb{E}(A^\prime B)$.\footnote{Like in Lavergne (2008), let $\Lambda =\mathbb{E}^{-1}(A^\prime A) \mathbb{E}(A^\prime B)$. Then
$$
\mathbb{E} [(B-A\Lambda)^\prime (B-A\Lambda)=\mathbb{E}(B^\prime B) - \mathbb{E}(B^\prime A) \mathbb{E}^{-1}(A^\prime A) \mathbb{E}(A^\prime B)
$$
is clearly positive semi-definite,  and is zero iff $B = A\Lambda.$}
We also use the following notation: for any symmetric matrices $B_1,B_2$,  $B_1 \gg B_2$ means $B_1 - B_2$ is positive semi-definite.
Let us write
\begin{eqnarray*}
E[ \widetilde{T}\left( a_{1}^{\ast }\ \widetilde{\rho }\right) \
\widetilde{T}^{\prime }\left( a_{1}^{\ast }\ \widetilde{\rho }\right)]
&=&E\left\{ \left[ E\left( a_{1}^{\ast }\ \widetilde{\rho }\ |\ W\right) \
\left( 1-\pi \right) \ \widetilde{\rho }^{\ \prime }\ |\ X^{\ast }\right] \
E^{-1}\left( \widetilde{\rho }\ \widetilde{\rho }^{\ \prime }\ |\ X^{\ast
}\right) \ \widetilde{\rho }\right. \\
&&\times \left. \widetilde{\rho }^{\ \prime }\ E^{-1}\left( \widetilde{\rho }\
\widetilde{\rho }^{\ \prime }\ |\ X^{\ast }\right) \ \left\{ \left[ E\left(
a_{1}^{\ast }\ \widetilde{\rho }\ |\ W\right) \ \left( 1-\pi \right) \
\widetilde{\rho }^{\ \prime }\ |\ X^{\ast }\right] \right\} ^{\prime }\
\right\} \\
&=&E\left\{ \left[ E\left( a_{1}^{\ast }\ \widetilde{\rho }\ |\ W\right) \
\left( 1-\pi \right) \ \widetilde{\rho }^{\ \prime }\ |\ X^{\ast }\right] \
E^{-1}\left( \widetilde{\rho }\ \widetilde{\rho }^{\ \prime }\ |\ X^{\ast
}\right) \ \right. \\
&&\times \left. \left\{ \left[ E\left( a_{1}^{\ast }\ \widetilde{\rho }\ |\
W\right) \ \left( 1-\pi \right) \ \widetilde{\rho }^{\ \prime }\ |\ X^{\ast }%
\right] \right\} ^{\prime }\ \right\} \\
(\text{Cauchy-Schwarz})\quad &\ll &E\left\{ E\left[ E\left( a_{1}^{\ast }\ \widetilde{\rho }%
\ |\ W\right) \ \left( 1-\pi \right) ^{2}\ E\left( \widetilde{\rho }^{\ \prime
}\ a_{1}^{\ast \prime }\ |\ W\right) \ |\ X^{\ast }\right] \ \right\} \\
&=&E\left[ E\left( a_{1}^{\ast }\ \widetilde{\rho }\ |\ W\right) \ \left(
1-\pi \right) ^{2}\ E\left( \widetilde{\rho }^{\ \prime }\ a_{1}^{\ast \prime
}\ |\ W\right) \right] \\
(\text{Cauchy-Schwarz})\quad &\ll &E\left[ \left( 1-\pi \right) ^{2}\ \left( a_{1}^{\ast }\
\widetilde{\rho }\right) \ \left( a_{1}^{\ast }\ \widetilde{\rho }\right)
^{\prime }\right]
\end{eqnarray*}
This implies
\begin{eqnarray*}
\left\Vert \widetilde{T}\left( a_{1}^{\ast }\ \widetilde{\rho }\right)
\right\Vert _{L^{2}}^{2} &=&E\left\{ tr\left[ \widetilde{T}^{\prime }\left(
a_{1}^{\ast }\ \widetilde{\rho }\right) \ \widetilde{T}\left( a_{1}^{\ast }\
\widetilde{\rho }\right) \right] \right\} =tr\left\{ E\left[ \widetilde{T}%
\left( a_{1}^{\ast }\ \widetilde{\rho }\right) \ \widetilde{T}^{\prime
}\left( a_{1}^{\ast }\ \widetilde{\rho }\right) \right] \right\} \\
&& \\
&\leq &\sup_{w}\left[ 1-\pi \left( w\right) \right] \ \left\Vert a_{1}^{\ast
}\ \widetilde{\rho }\right\Vert _{L^{2}}^{2}\leq \beta \ \left\Vert
a_{1}^{\ast }\ \widetilde{\rho }\right\Vert _{L^{2}}^{2},
\end{eqnarray*}%
where $\beta =\sup\limits_{w}\left[ 1-\pi \left( w\right) \right]
=1-\inf\limits_{w}\pi \left( w\right) <1$ by assumption $\left( \ref%
{contraction 2}\right) $. Deduce that $\widetilde T$ is a contracting map.

\subsection{Efficient score with parametric selection probability in \\regression-like models with missing data}\label{rrz_selec_prob}
Let $X^{\left( 1\right) }=X^{\ast }$, $ X^{\left(
2\right) }=W$ and the parameter vector $\theta = (\alpha^\prime, \gamma^\prime)^\prime$. Moreover, let
\begin{eqnarray*}
g_{1} (Z,\theta) &=&\dfrac{\delta }{\pi \left( W,\gamma \right) }\ \rho \left(
Y,X^{\ast },\alpha \right) ,\qquad
g_{2} (Z,\theta)=\dfrac{\delta }{\pi \left( W,\gamma \right) }-1, \\
\overline{S}_{\theta } &=&\overline{a}_{1}\left( X^{\ast }\right) \ g_{1}(Z,\theta)+%
\overline{a}_{2}\left( W\right) \ g_{2}(Z,\theta)=\left(
\begin{array}{c}
\overline{S}_{\alpha } \\
\overline{S}_{\gamma }%
\end{array}%
\right) ,
\end{eqnarray*}%
where%
\begin{eqnarray*}
\overline{a}_{1}\left( X^{\ast }\right) &=&\overline{a}_{1}\left( X^{\left(
1\right) }\right) \\
&=&\left(
\begin{array}{c}
-E\left[ E\left( \pi^{-1}( W,\gamma_0)\delta \mid X^{\ast },W\right)  \partial
_{\alpha }\rho ^{\prime }\mid X^{\ast }\right]  E^{-1}\left( \pi^{-1}( W,\gamma_0) \rho \ \rho ^{\prime }
 \mid X^{\ast }\right) \\
\\
0%
\end{array}%
\right) \\
&+&E\left\{ E\left[ \overline{a}_{1}\left( X^{\ast }\right)  \rho
|\ W\right] \ \left( \pi^{-1} ( W,\gamma_0)-1\right) \rho ^{\prime }\mid  X^{\ast
}\right\} E^{-1}\left(
\pi^{-1} ( W,\gamma_0) \rho  \rho ^{\prime }\ |\ X^{\ast }\right) .
\end{eqnarray*}%
If we partition $\overline{a}_{1}\left( X^{\ast }\right) $ in $\overline{a}%
_{1}\left( X^{\ast }\right) =\left(
\begin{array}{c}
\overline{a}_{1,\alpha }\left( X^{\ast }\right) \\
\overline{a}_{1,\gamma }\left( X^{\ast }\right)%
\end{array}%
\right) $ and we use the same short notation as previously,  the preceding equations can be written as%
\begin{eqnarray*}
\overline{a}_{1,\alpha }\left( X^{\ast }\right) &=&-E\left( E\left( \dfrac{%
\delta }{\pi }\ |\ X^{\ast },W\right) \ \partial _{\alpha }\rho ^{\prime }\
|\ X^{\ast }\right) \ E^{-1}\left( \dfrac{1}{\pi }\ \rho \ \rho ^{\prime }\
|\ X^{\ast }\right) \\
&&\qquad +E\left\{ E\left[ \overline{a}_{1,\alpha }\left( X^{\ast }\right) \
\rho \ |\ W\right] \ \left( \dfrac{1}{\pi }-1\right) \ \rho ^{\prime }\ |\
X^{\ast }\right\} \\
&&\qquad \qquad \qquad \qquad \qquad \qquad \qquad \times \ E^{-1}\left(
\dfrac{1}{\pi }\ \rho \ \rho ^{\prime }\ |\ X^{\ast }\right) , \\
\overline{a}_{1,\gamma }\left( X^{\ast }\right) &=&E\left\{ E\left[
\overline{a}_{1,\gamma }\left( X^{\ast }\right) \ \rho \ |\ W\right] \
\left( \dfrac{1}{\pi }-1\right) \ \rho ^{\prime }\ |\ X^{\ast }\right\} \\
&&\qquad \qquad \qquad \qquad \qquad \qquad \qquad \times \ E^{-1}\left(
\dfrac{1}{\pi }\ \rho \ \rho ^{\prime }\ |\ X^{\ast }\right) ,
\end{eqnarray*}%
with the obvious solution $\overline{a}_{1,\gamma }\equiv 0$ for the
subvector of $\overline{a}_{1}$ corresponding to $\gamma $ (possibly not the
unique solution, but any solution yields the same efficient score $\overline{%
S}_{\theta }$). Similar calculations can be done for $\overline{a}_{2}\left(
W\right) ~:$

\begin{eqnarray*}
\overline{a}_{2}\left( W\right) &=&\overline{a}_{2}\left( X^{\left( 2\right)
}\right) \\
&& \\
&=&\left(
\begin{array}{c}
0 \\
\\
\dfrac{1}{\pi }\ \partial _{\gamma }\pi%
\end{array}%
\right) \ \dfrac{\pi }{1-\pi }-E\left[ \overline{a}_{1}\left( X^{\ast
}\right) \ \rho \ |\ W\right] , \\
&&
\end{eqnarray*}%
which gives, for $\overline{a}_{2}\left( W\right) =\left(
\begin{array}{c}
\overline{a}_{2,\alpha }\left( W\right) \\
\overline{a}_{2,\gamma }\left( W\right)%
\end{array}%
\right) $,%
\begin{eqnarray*}
\overline{a}_{2,\alpha }\left( W\right) &=&-E\left[ \overline{a}_{1,\alpha
}\left( X^{\ast }\right) \ \rho \ |\ W\right] \\
&& \\
\overline{a}_{2,\gamma }\left( W\right) &=&\dfrac{1}{1-\pi }\ \partial
_{\gamma }\pi -E\left[ \overline{a}_{1,\gamma }\left( X^{\ast }\right) \
\rho \ |\ W\right] =\dfrac{1}{1-\pi }\ \partial _{\gamma }\pi .
\end{eqnarray*}%
Therefore,%
\begin{equation*}
\overline{S}_{\theta }=\overline{a}_{1}\left( X^{\ast }\right) \ g_{1}+%
\overline{a}_{2}\left( W\right) \ g_{2}=\left(
\begin{array}{c}
\overline{S}_{\alpha } \\
\\
\overline{S}_{\gamma }%
\end{array}%
\right) =\left(
\begin{array}{c}
\overline{a}_{1,\alpha }\left( X^{\ast }\right) \ g_{1}+\overline{a}%
_{2,\alpha }\left( W\right) \ g_{2} \\
\\
\overline{a}_{2,\gamma }\left( W\right) \ g_{2}%
\end{array}%
\right) ,
\end{equation*}%
where%
\begin{eqnarray*}
\overline{a}_{1,\alpha }\left( X^{\ast }\right) &=&-E\left( \partial
_{\alpha }\rho ^{\prime }\ |\ X^{\ast }\right) \ E^{-1}\left( \dfrac{1}{\pi }%
\ \rho \ \rho ^{\prime }\ |\ X^{\ast }\right) \\
&&\qquad +E\left\{ E\left[ \overline{a}_{1,\alpha }\left( X^{\ast }\right) \
\rho \ |\ W\right] \ \dfrac{1-\pi }{\pi }\ \rho ^{\prime }\ |\ X^{\ast
}\right\} \\
&&\qquad \qquad \qquad \qquad \qquad \qquad \qquad \times \ E^{-1}\left(
\dfrac{1}{\pi }\ \rho \ \rho ^{\prime }\ |\ X^{\ast }\right) , \\
&& \\
\overline{a}_{2,\alpha }\left( W\right) &=&-E\left[ \overline{a}_{1,\alpha
}\left( X^{\ast }\right) \ \rho \ |\ W\right] , \\
&& \\
\overline{a}_{2,\gamma }\left( W\right) &=&\dfrac{\pi }{1-\pi }\ \partial
_{\gamma }\pi \qquad \left( =\dfrac{\pi \left( W,\gamma _{0}\right) }{1-\pi
\left( W,\gamma _{0}\right) }\ \partial _{\gamma }\pi \left( W,\gamma
_{0}\right) \right) .
\end{eqnarray*}%
Now, for any $s=b\left( W\right) \cdot g_{2}=b\left( W\right) \left( \dfrac{%
\delta }{\pi \left( W,\gamma _{0}\right) }-1\right) \in \mathcal{T}%
_{2}^{\perp }$, we have%
\begin{eqnarray*}
E\left( \overline{S}_{\alpha }\ s^{\prime }\ |\ W\right) &=&E\left[
\overline{S}_{\alpha }\ \left( \dfrac{\delta }{\pi \left( W,\gamma
_{0}\right) }-1\right) \ |\ W\right] \ b^{\prime }\left( W\right) \\
&& \\
&=&E\left\{ \left[ \overline{a}_{1,\alpha }\left( X^{\ast }\right) \ \dfrac{%
\delta }{\pi }\ \rho +\overline{a}_{2,\alpha }\left( W\right) \ \left(
\dfrac{\delta }{\pi }-1\right) \right] \ \left( \dfrac{\delta }{\pi }%
-1\right) \ |\ W\right\} \ b^{\prime }\left( W\right) \\
&& \\
&=&\left\{ E\left[ \overline{a}_{1,\alpha }\left( X^{\ast }\right) \ \rho \
|\ W\right] +\overline{a}_{2,\alpha }\left( W\right) \right\} \ \left(
\dfrac{1}{\pi }-1\right) \ b^{\prime }\left( W\right) \\
&& \\
&=&\left\{ E\left[ \overline{a}_{1,\alpha }\left( X^{\ast }\right) \ \rho \
|\ W\right] -E\left[ \overline{a}_{1,\alpha }\left( X^{\ast }\right) \ \rho
\ |\ W\right] \right\} \ \left( \dfrac{1}{\pi }-1\right) \ b^{\prime }\left(
W\right) \\
&& \\
&=&0,
\end{eqnarray*}%
so that, since $\overline{S}_{\gamma }=\overline{a}_{2,\gamma }\left(
W\right) \cdot g_{2}$, we obtain%
\begin{equation*}
E\left( \overline{S}_{\alpha }\ \overline{S}_{\gamma }^{\prime }\right) =E%
\left[ E\left( \overline{S}_{\alpha }\ \overline{S}_{\gamma }^{\prime }\ |\
W\right) \right] =0.
\end{equation*}%
This means that the efficient score $S_{\alpha }^{\ast }$ for $\alpha $,
equal to the residual of the (componentwise) projection of $\overline{S}%
_{\alpha }$ on $\overline{S}_{\gamma }$, coincides with $\overline{S}%
_{\alpha },$%
\begin{equation*}
S_{\alpha }^{\ast }=\overline{S}_{\alpha }-E\left( \overline{S}_{\alpha }\
\overline{S}_{\gamma }^{\prime }\right) \ V^{-1}\left( \overline{S}_{\gamma
}\right) \ \overline{S}_{\gamma }=\overline{S}_{\alpha },
\end{equation*}%
and has the same expression, as already noticed in Robins, Rotnitzky and
Zhao~(1994), as in the case where $\pi \left( W\right) $ is completely
known~:%
\begin{eqnarray*}
S_{\alpha }^{\ast } &=&\overline{S}_{\alpha }=\overline{a}_{1,\alpha }\left(
X^{\ast }\right) \ g_{1}+\overline{a}_{2,\alpha }\left( W\right) \ g_{2} \\
&& \\
&=&a_{1}^{\ast }\left( X^{\ast }\right) \ g_{1}+a_{2}^{\ast }\left( W\right)
\ g_{2}.
\end{eqnarray*}

\end{document}